\newcommand{\Bk}{\color{black}}
\newcommand{\Ab}{\mathbf A}
\newcommand{\Fb}{\mathbf F}
\newcommand{\R}{\mathbb R}
\newcommand{\C}{\mathbb C}
\newcommand{\gse}{\mathrm E_{\rm gs}(\kappa,H,\varepsilon)}
\DeclareMathOperator{\curl}{curl}
\newtheorem{thm}{Theorem}[section]
\newtheorem{prop}[thm]{Proposition}
\newtheorem{lem}[thm]{Lemma}
\theoremstyle{remark}
\newtheorem{rem}[thm]{Remark}
\newcommand{\eq}{\begin{equation}}
\newcommand{\eeq}{\end{equation}}
\def\curl{\text{\rm curl\,}}
\def\gse{\mathrm E}
\def\C{\bold C}
\def\R{\mathbb  R}
\def\Z{\mathbb Z}
\def\0{\bold 0}
\def\mA{\mathcal A}
\def\C{\mathbb C}
\numberwithin{equation}{section}
\title[Magnetic Laplacian]
{Averaging of magnetic fields and applications}
\author{Ayman Kachmar}
\author{Mohammad Wehbe}
\address{Department of Mathematics, Lebanese University, Nabatieh, Lebanon.}
\email{akachmar@ul.edu.lb}
\email{wehbewehbe@gmail.com}
\date{\today}
\thanks{Mathematics Subject Classification (2010): 35B40, 35P15, 35Q56}
\begin{document}
\begin{abstract}
We estimate the magnetic Laplacian energy norm in appropriate planar domains under a weak regularity hypothesis on the magnetic field. Our main contribution is an averaging estimate, valid in small cells, allowing  us to pass from  non-uniform to uniform magnetic fields. As a matter of application,  we derive new upper and lower bounds  of the lowest eigenvalue  of the Dirichlet Laplacian
which match in the regime of large  magnetic field intensity. Furthermore, our averaging technique allows us to estimate the non-linear Ginzburg-Landau energy, and as a byproduct,  yields a non-Gaussian trial state for the Dirichlet magnetic Laplacian. 
\end{abstract}
\maketitle

\tableofcontents

\section{Introduction}
The spectral properties of magnetic Schr\"odinger operators with minimal regularity assumptions on the magnetic field, magnetic potential and electrical potential, have been central since decades \cite{LS}. Averaging of magnetic fields was also a valuable tool to study such spectral properties, notably the question of existence of a compact resolvent \cite{I}. 

In this paper, we study the averaging of magnetic fields in the context of spectral asymptotics (large field/semi-classical asymptotics). Our estimates will  allow us to capture the leading order term in the large field asymptotics for the ground state energy of the magnetic Laplacian with a Dirichlet condition, via the essential infimum of the scalar magnetic field, under a weak regularity hypothesis.

\subsection{The magnetic field}
Consider a real-valued function 
\begin{equation}\label{eq:B}
B\in H^1(\R^2)\,.
\end{equation}
The function $B$   stands for a  magnetic field (more precisely this is the vertical magnetic field with non-uniform intensity $B$, i.e. $B\vec z$).  
We introduce the corresponding magnetic potential $\Ab$ as follows
\begin{equation}\label{eq:mag-pot}
\Ab(x)=(A_1(x),A_2(x)):=2\int_0^1 B(sx)\Ab_0(sx)ds\qquad (x\in\R^2)\,,
\end{equation}
where $\Ab_0$ is the \emph{canonical} magnetic potential, satisfying $\curl\Ab_0=1$ and  defined as follows
\begin{equation}\label{eq:A0}
\Ab_0(x)=\frac12(-x_2,x_1) \quad (x=(x_1,x_2)\in\R^2)\,.
\end{equation}
Clearly, $\Ab\in H^2(\R^2)$ and 
\begin{equation}\label{eq:curl-A}
\curl\Ab:=\partial_{x_1}A_2-\partial_{x_2}A_1=B\quad{\rm in ~}\R^2\,.
\end{equation}
There are many other reasonable choices  for the magnetic potential generating the magnetic field $B$, e.g. $\Ab+\nabla\chi$ for any smooth function $\chi$. 

The aim of this paper is to estimate quantities of the form
\begin{equation}\label{eq:qf-g}
\int_U |(\nabla-i\sigma\Ab)u|^2\,dx
\end{equation}
where $\sigma\in\R$, $U$ is an appropriate convex subset of $\R^2$, typically a square or a disc of small diameter compared to the parameter $\sigma$, and $u\in H^1(U)$. Such questions naturally occur in many problems of mathematical physics, such as superconductivity \cite{FH-b}, liquid crystals \cite{FKP} and the theory of Schr\"odinger operators \cite{R}. The case of a smooth $\Ab$ is well developed in the literature, so our aim here is to address this question for the less regular case where   $B\in H^1(\R^2)$ (i.e. $\Ab\in H^2(\R^2)$). This is related to \cite[Sec.~16.6.1, Open Problem~9]{FH-b} and \cite[Problem~2.2.9]{Pa8}. 

Our approach to approximate the quantity in \eqref{eq:qf-g} is through an averaging technique which will allow us to pass from $\Ab$ 
generating the non-smooth field $B$, to $\Ab_{\rm av}$ generating a constant field $B_{\rm av}$. The approximation will be valid in the regime of large field intensity, $\sigma\to+\infty$, and small domain $U$, ${\rm diam}(U)\to0$ (${\rm  diam}(U)$ stands for the \emph{diameter} of $U$). The precise statement will be given in Theorem~\ref{cor-avg} and Proposition~\ref{prop:app-qf} below.

\subsection{The averaging estimate}

Assume that
\begin{equation}\label{eq:U}
x_0\in U\subset\R^2~\text{and}~U\text{ is  open,  convex and  bounded}\,.
\end{equation}
We denote by $|U|$ the area of $U$, and by ${\rm diam}(U)$, the diameter of $U$. 
We introduce the new magnetic potential
\begin{equation}\label{eq:A-new}
\Ab_{\rm new}^U(x)=2\int_0^1 B(s(x-x_0)+x_0)\Ab_0\big(s(x-x_0)\big)\,ds\,,
\end{equation}
where $\Ab_0$ is the canonical magnetic potential introduced in \eqref{eq:A0}.

Note that, on $U$, $\curl\Ab_{\rm new}^U=B=\curl\Ab$, where $\Ab$ is the magnetic potential in \eqref{eq:mag-pot}.  So there exists a function $\varphi^U\in H^1(U)$ such that
\begin{equation}\label{eq:A-new=A}
\Ab=\Ab_{\rm new}^U-\nabla\varphi^U\quad{\rm on~}U\,.
\end{equation}
We introduce the average of the magnetic field $B$ in $U$ as follows
\begin{equation}\label{eq:B-av}
B_{\rm av}^U=\frac1{|U|}\int_U B(x)\,dx\,.
\end{equation}
It is then natural to introduce  the \emph{average} magnetic potential 
\begin{equation}\label{eq:A-av}
\Ab_{\rm av}^U(x)=B_{\rm av}^U\Ab_0(x-x_0)=2B_{\rm av}^U\int_0^1 \Ab_0\big(s(x-x_0)\big)\,ds\,,
\end{equation}
which generates the constant averaged magnetic field,  $\curl\Ab_{\rm av}^U=B_{\rm av}^U$. Theorem~\ref{cor-avg} below establishes that  the magnetic potential $\Ab_{\rm av}^U$ is a good approximation of $\Ab_{\rm new}$ in the convex domain $U$.

\begin{thm}\label{cor-avg}
Let $B\in H^1(\R^2)$. For  every  domain $U\subset\R^2$ satisfying \eqref{eq:U}, the following inequality holds,
$$ \int_U|\Ab_{\rm new}^U(x)-\Ab_{\rm av}^U(x)|^2\,dx\leq 8\delta^4\|\nabla B\|_{L^2(U)}^2\,,$$ 
where $\delta={\rm diam}(U)$,  $\Ab_{\rm new}^U$ and $\Ab_{\rm av}^U$ are introduced in \eqref{eq:A-new} and \eqref{eq:A-av} respectively.
\end{thm}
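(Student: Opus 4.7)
The plan is to compute the difference $\Ab_{\rm new}^U(x) - \Ab_{\rm av}^U(x)$ pointwise, exploit the homogeneity of $\Ab_0$ to reduce everything to a Poincaré--Wirtinger estimate for $B$ on the convex set $U$. Concretely, subtracting the defining integrals gives
\begin{equation*}
\Ab_{\rm new}^U(x) - \Ab_{\rm av}^U(x) = 2\int_0^1 \bigl[B(s(x-x_0)+x_0) - B_{\rm av}^U\bigr]\,\Ab_0\bigl(s(x-x_0)\bigr)\,ds.
\end{equation*}
Since $\Ab_0$ is linear, $\Ab_0(s(x-x_0)) = s\,\Ab_0(x-x_0)$, so $\Ab_0(x-x_0)$ pulls outside the $s$-integral and the integrand gains an extra factor $s$. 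Together with the pointwise bound $|\Ab_0(x-x_0)| \le \tfrac12|x-x_0| \le \tfrac{\delta}{2}$, this yields
\begin{equation*}
|\Ab_{\rm new}^U(x) - \Ab_{\rm av}^U(x)|^2 \le \delta^{2}\Bigl|\int_0^1 s\,\bigl[B(s(x-x_0)+x_0) - B_{\rm av}^U\bigr]\,ds\Bigr|^2.
\end{equation*}

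Next I would apply Cauchy--Schwarz to the inner $s$-integral, keeping the $s^{2}$ together with the squared integrand (the key point: the weight must be $s^{2}$, not $1$, so that it will later cancel the Jacobian of the affine scaling). This yields
\begin{equation*}
\Bigl|\int_0^1 s\,h(s)\,ds\Bigr|^2 \le \int_0^1 s^2 h(s)^2\,ds,
\end{equation*}
with $h(s) := B(s(x-x_0)+x_0) - B_{\rm av}^U$. Integrating over $x\in U$, swapping the $s$- and $x$-integrals (Fubini), and performing the change of variable $y = s(x-x_0)+x_0$ (Jacobian $s^{-2}\,dy$, whose image $U_{s} = sU + (1-s)x_0$ lies in $U$ because $U$ is convex and contains $x_0$), the $s^{2}$ cancels exactly against $s^{-2}$. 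One is left with
\begin{equation*}
\int_U |\Ab_{\rm new}^U - \Ab_{\rm av}^U|^2\,dx \le \delta^{2}\int_0^1 \int_{U_s}|B(y) - B_{\rm av}^U|^2\,dy\,ds \le \delta^{2}\,\|B - B_{\rm av}^U\|_{L^2(U)}^2.
\end{equation*}

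Finally I would apply the Poincaré--Wirtinger inequality on the convex domain $U$ of diameter $\delta$; the standard proof (representing $B(x)-B(y)$ as a line integral of $\nabla B$, symmetrising in $t\leftrightarrow 1-t$ and splitting $[0,1]$ at $1/2$ to evaluate the Jacobian safely) gives $\|B - B_{\rm av}^U\|_{L^2(U)}^2 \le C\delta^{2}\|\nabla B\|_{L^2(U)}^2$ with an explicit $C \le 8$, concluding the proof.

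The only delicate step is the Fubini/change-of-variables argument: a naive Cauchy--Schwarz with uniform weight $1$ produces a factor $\int_0^1 s^{-1}\,ds$ that diverges at $s = 0$. The trick is the homogeneity of $\Ab_0$, which transfers the needed $s^{2}$ into the Cauchy--Schwarz estimate and makes the reduction to a pure Poincaré bound for $B$ on $U$ work cleanly.
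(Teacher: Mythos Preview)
Your proposal is correct and follows essentially the same approach as the paper: the pointwise bound using the homogeneity of $\Ab_0$, the Cauchy--Schwarz/Jensen step in $s$ producing the crucial $s^2$ weight, and the Poincar\'e--Wirtinger estimate on the convex set $U$ via the line-integral representation split at $\tau=1/2$ are exactly the ingredients of the paper's proof. The only organizational difference is that the paper packages the Poincar\'e step as an $s$-uniform estimate (Proposition~\ref{prop:B(y)=B(x)}: $s^2\int_U|B_{s,x_0}-B_{\rm av}^U|^2\,dx\le 8\delta^2\|\nabla B\|_{L^2(U)}^2$) and then integrates in $s$, whereas you first perform the change of variable $y=s(x-x_0)+x_0$ to collapse everything to $\delta^2\|B-B_{\rm av}^U\|_{L^2(U)}^2$ and only then invoke the standard Poincar\'e--Wirtinger inequality on $U$; this second route is in fact recorded by the authors in Remark~\ref{rem:cor-J}.
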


\subsection{The Dirichlet magnetic Laplacian}

As a consequence of Theorem~\ref{cor-avg}, we can estimate the lowest eigenvalue,  $\lambda(\sigma,\Ab;\Omega)$, of the Dirichlet magnetic Laplacian
$-(\nabla-i\sigma\Ab)^2$ in $L^2(\Omega)$, for a   domain $\Omega$  with a  smooth $C^1$ boundary. Studying the strong field asymptotics,  the essential infimum of  the function $B$ in $\Omega$ shows up; this is the quantity introduced as follows
\begin{equation}\label{eq:ess-inf}
m_0(B;\Omega):={\rm ess}\inf_{\hskip-0.5cm x\in\Omega} B(x)=\sup\{c\in\R~:~B(x)\geq c~{\rm a.e.~on~}\Omega\}\,.
\end{equation}
 The variational min-max principle allows us to express the eigenvalue as follows\,\footnote{The definition of the eigenvalue $\lambda(\sigma,\Ab;\Omega)$ requires a vector field $\Ab$ (and consequently a magnetic field $B$) defined on $\Omega$, not the whole space $\R^2$. Our assumption on the domain $\Omega$ allows us to extend  functions in the Sobolev space $H^1(\Omega)$ to functions in the space $H^1(\R^2)$, so that starting with $B\in H^1(\R^2)$ is not really a restriction. Our proofs require to deal with the value of  the magnetic field  outside the set $\Omega$.}  (when $\Omega$ is bounded)
\begin{equation}\label{eq:evD}
\lambda(\sigma,\Ab;\Omega)=\inf_{u\in  H^1_0(\Omega)\setminus\{0\}}
\frac{\|(\nabla-i\sigma\Ab)u\|^2_{L^2(\Omega)}}{\|u\|^2_{L^2(\Omega)}}\,.
\end{equation} 
Now we state our \emph{new} estimates on the eigenvalue $\lambda(\sigma,\Ab;\Omega)$.
\begin{thm}\label{thm:evD}
Assume that $\Omega=\bigcup\limits_{i=1}^N \Omega_i$ where $N\geq 1$ is a positive integer, the sets $\overline{\Omega_i}$ are pairwise disjoint, and each $\Omega_i$ is a bounded  connected domain of $\R^2$ such that $\partial\Omega_i$   consists  of a finite number of smooth $C^1$ closed curves.  

If $B\in H^1(\Omega)$ and the essential infimum in \eqref{eq:ess-inf} is {\bf positive}, then the lowest eigenvalue in \eqref{eq:evD} satisfies
\[m_0(B;\Omega)\sigma\leq   \lambda(\sigma,\Ab;\Omega)\leq m_0(B;\Omega)\sigma+o(\sigma)\quad\big(\sigma\to+\infty\big)\,.\]
\end{thm}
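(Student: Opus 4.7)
Setting $P_j := -i\partial_j - \sigma A_j$, we have $\|(\nabla-i\sigma\Ab)u\|^2 = \|P_1 u\|^2 + \|P_2 u\|^2$ and $[P_1,P_2] = -i\sigma B$. The factorization $(P_1+iP_2)(P_1-iP_2) = P_1^2 + P_2^2 - \sigma B$, combined with one integration by parts (legitimate on $C_c^\infty(\Omega)$ and extended to $H^1_0(\Omega)$ by density, using $\Ab \in H^2 \hookrightarrow L^\infty_{\mathrm{loc}}$ in dimension two), will yield
\[
\int_\Omega |(\nabla-i\sigma\Ab)u|^2\,dx = \int_\Omega |(P_1-iP_2)u|^2\,dx + \sigma \int_\Omega B|u|^2\,dx \;\ge\; \sigma\,m_0(B;\Omega)\,\|u\|_{L^2(\Omega)}^2,
\]
using $B \ge m_0(B;\Omega)$ a.e.\ and that $m_0(B;\Omega) > 0$. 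This gives $\lambda(\sigma,\Ab;\Omega) \ge m_0(B;\Omega)\sigma$ directly.

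\textbf{Upper bound: choice of trial state.} Fix $\varepsilon > 0$. Pick a component $\Omega_{i_0}$ realizing $\min_i m_0(B;\Omega_i) = m_0(B;\Omega)$ and choose $x_0 \in \Omega_{i_0}$ that is simultaneously a Lebesgue point of $B$ and of $|\nabla B|^2$ (a full-measure subset) with $B(x_0) < m_0(B;\Omega) + \varepsilon$ (a positive-measure subset by the definition of essential infimum). As $\delta \to 0^+$, the two Lebesgue-point conditions furnish
\[
b := B_{\mathrm{av}}^{B_\delta(x_0)} \longrightarrow B(x_0), \qquad \|\nabla B\|_{L^2(B_\delta(x_0))}^2 = \pi \delta^2 \bigl(|\nabla B(x_0)|^2 + o(1)\bigr) = O(\delta^2),
\]
the second estimate being the crucial \emph{quantitative} upgrade over the bare $o(1)$ decay given by $B\in H^1$. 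With $\delta = \delta(\sigma) := \sigma^{-2/5}$ and $U := B_\delta(x_0) \subset \Omega_{i_0}$ (valid for $\sigma$ large), and a radial cutoff $\chi \in C_c^\infty(U)$ with $\chi \equiv 1$ on $B_{\delta/2}(x_0)$, the trial state I would use is the gauge-transformed coherent state
\[
u(x) = e^{-i\sigma\varphi^U(x)}\,\chi(x)\exp\!\Big(-\tfrac{\sigma b}{4}|x-x_0|^2\Big),
\]
extended by zero outside $U$, so that $u \in H^1_0(\Omega)$. By \eqref{eq:A-new=A}, $\|(\nabla-i\sigma\Ab)u\|^2 = \|(\nabla-i\sigma\Ab_{\mathrm{new}}^U)v\|^2$ with $v := \chi\, e^{-\sigma b|\cdot-x_0|^2/4}$.

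\textbf{Energy estimate and conclusion.} Split $\Ab_{\mathrm{new}}^U = \Ab_{\mathrm{av}}^U + (\Ab_{\mathrm{new}}^U - \Ab_{\mathrm{av}}^U)$ and apply $|a+b|^2 \le (1+\eta)|a|^2 + (1+\eta^{-1})|b|^2$. Since $v_0 := e^{-\sigma b|\cdot-x_0|^2/4}$ is the lowest Landau level ground state of $-(\nabla-i\sigma\Ab_{\mathrm{av}}^U)^2$ on $\R^2$ with eigenvalue $\sigma b$, and its localization scale $(\sigma b)^{-1/2}$ is much smaller than $\delta=\sigma^{-2/5}$, a direct IBP computation gives $\|(\nabla-i\sigma\Ab_{\mathrm{av}}^U)v\|^2 = \sigma b\|v\|^2 + O(\sigma e^{-c\sigma^{1/5}})\|v\|^2$ and $\|v\|^2 \asymp (\sigma b)^{-1}$. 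The other term is controlled using Theorem~\ref{cor-avg} and $|v|\le 1$:
\[
\sigma^2\!\int_U|\Ab_{\mathrm{new}}^U - \Ab_{\mathrm{av}}^U|^2|v|^2\,dx \;\le\; 8\sigma^2\delta^4\|\nabla B\|^2_{L^2(U)} \;=\; O(\sigma^2\delta^6) \;=\; O(\sigma^{2/5}).
\]
Dividing by $\|v\|^2$ and optimizing $\eta$ (minimum at $\eta \asymp \sigma\delta^3 = \sigma^{-1/5} \to 0$) gives
\[
\frac{\|(\nabla-i\sigma\Ab)u\|^2}{\|u\|^2} \;\le\; \sigma b + O(\sigma^{4/5}) + O(\sigma e^{-c\sigma^{1/5}}) \;=\; \sigma b + o(\sigma).
\]
Since $b < m_0(B;\Omega) + 2\varepsilon$ for $\sigma$ large, $\lambda(\sigma,\Ab;\Omega) \le (m_0(B;\Omega) + 2\varepsilon)\sigma + o(\sigma)$; letting $\varepsilon \to 0^+$ finishes the matching upper bound.

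\textbf{Main obstacle.} The delicate point is the simultaneous calibration of three competing constraints on the cell size: $\delta \to 0$ (so $b \to m_0$), $\sigma\delta^2 \to \infty$ (so the Gaussian is essentially supported in $U$), and $\sigma^2\delta^4\|\nabla B\|^2_{L^2(B_\delta)} = o(1)$ (so the error from Theorem~\ref{cor-avg} is $o(\sigma)$). With only the qualitative decay $\|\nabla B\|^2_{L^2(B_\delta)} \to 0$ afforded by $B \in H^1$, no polynomial choice $\delta = \sigma^{-\alpha}$ is \emph{a priori} guaranteed to work. The essential trick is to center the trial state at a Lebesgue point of $|\nabla B|^2$, which upgrades the decay to the quantitative rate $O(\delta^2)$ and allows any exponent $\alpha \in (1/3, 1/2)$ — e.g.\ $\alpha = 2/5$ — to reconcile all three constraints.
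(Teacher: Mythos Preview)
Your proof is correct and follows essentially the same route as the paper: the lower bound via the commutator/factorization identity (the paper obtains it by approximating $\Ab$ with smooth fields and passing to the limit, but this is the same computation), and the upper bound via a cutoff Gaussian centered at a point that is simultaneously a Lebesgue point of $B$ and of $|\nabla B|^2$, combined with the averaging estimate of Theorem~\ref{cor-avg}. The paper uses the exponent $\rho=3/8$ and channels the estimate through Proposition~\ref{prop:app-qf}, whereas you take $\alpha=2/5$ and apply Theorem~\ref{cor-avg} directly, but the key idea---your ``essential trick'' of upgrading $\|\nabla B\|_{L^2(B_\delta)}^2$ to $O(\delta^2)$ via the Lebesgue point---is exactly the paper's; note only the harmless slip $\sigma^2\delta^6=\sigma^{-2/5}$ (not $\sigma^{2/5}$), which does not affect your correct final error $O(\sigma^{4/5})$.
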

The content of Theorem~\ref{thm:evD} is consistent with the known estimates for a smooth magnetic field  (see \cite{HM}), in which case the essential infimum becomes
\[m_0(B;\Omega)=\min_{x\in\overline{\Omega}}B(x)\,,\]
and the remainder term $o(\sigma)$ can be explicitly controlled.

The  non-asymptotic lower bound,  $\lambda(\sigma,\Ab;\Omega)\geq  \sigma\, m_0(B;\Omega)$, follows by a standard argument. The matching upper bound,
$\lambda(\sigma,\Ab;\Omega)\leq m_0(B;\Omega)\sigma+o(\sigma)$, follows by constructing a trial state; the produced errors are controlled by the averaging estimate of Theorem~\ref{cor-avg}.

The novelty in Theorem~\ref{thm:evD} is establishing its validity in the weakly regular situation when \eqref{eq:B} holds. This prevents us of deducing it from other works treating non-uniform magnetic fields, like smooth magnetic fields \cite{Att, HM, M,  R}, $B\in C^{0,\alpha}(\R^2)$, or step magnetic fields \cite{A, HPRS}. 

It would be desirable to establish Theorem~\ref{thm:evD} under the much weaker hypothesis, $B\in L^2(\Omega)$. This is motivated by the current Theorem~\ref{thm:evD} and the existing results when $B$ is a step function \cite{A, HPRS}. However, knowing  $B\in L^2(\Omega)$ without further regularity, our averaging estimate in Theorem~\ref{cor-avg} will be out of reach, thereby preventing us from proving the upper bound in Theorem~\ref{thm:evD} without the additional property $\nabla B\in L^2(\Omega;\R^2)$.

\subsection{The Ginzburg-Landau functional}

Our averaging mechanism is robust in the study of the non-linear Ginzburg-Landau functional (see Theorem~\ref{thm:GL} below), which also contributes to the proof of Theorem~\ref{thm:evD} (by providing us with a useful trial state). Under the regularity assumption \eqref{eq:B} on $B$, our contribution adds to the mainstream of understanding the role of non-uniform  magnetic fields in the Ginzburg-Landau model \cite{Att, AKP, HK-arma, HK-cvpde, PK}. Handling the particularities of our regularity hypothesis in \eqref{eq:B} would not be possible without the averaging estimate of Theorem~\ref{cor-avg}.

We restrict our study to a bounded domain $\Omega\subset\R^2$ which we assume  connected and with a smooth boundary consisting of a finite number of smooth curves of class $C^1$. More precisely, we assume that $\Omega=\tilde\Omega\setminus \bigcup\limits_{k=1}^n \omega_k$, where $\omega_1,\cdots,\omega_n,\tilde\Omega$ are simply connected domains with smooth $C^1$ boundaries, each $\omega_k\subset\tilde\Omega$, and the sets $\overline{\omega_k}$ are pair-wise disjoint.

A central role will be played by    the  magnetic potential $\Fb\in H^2(\tilde\Omega)$ satisfying
\begin{equation}\label{eq:F}
\curl\Fb=B\,,\quad {\rm div}\Fb=0\quad {\rm in}~\tilde\Omega\,,\quad 
\nu\cdot\Fb=0\quad{\rm on~}\partial\tilde\Omega\,,  
\end{equation}
where $\nu$ is the unit interior normal vector of $\partial\tilde\Omega$. Since the domain $\tilde\Omega$ is simply connected,  $\Ab-\Fb$ is a gradient field on $\tilde\Omega$ and we can find a  function $\vartheta\in H^3(\tilde\Omega)$ such that  (see \cite[Prop.~D.1.1]{FH-b})
\begin{equation}\label{eq:A=F}
\Ab=\Fb+\nabla \vartheta\quad{\rm on~}\tilde\Omega\,.
\end{equation}

\subsubsection*{The functional \& critical configurations}\

The GL  functional is defined for configurations $(\psi,\mA)$ in the space $H^1(\Omega;\C)\times H^1(\tilde\Omega;\R^2)$ as follows
\begin{equation}\label{eq:GL}
\mathcal G(\psi,\mA)=\int_\Omega\left(|(\nabla-i\kappa H\mA)\psi|^2-\kappa^2|\psi|^2+\frac{\kappa^2}{2}|\psi|^4\right)\,dx+(\kappa H)^2\int_{\tilde\Omega}|\curl(\mA-\Fb)|^2dx\,,
\end{equation}
where $\Fb$ is the magnetic potential introduced in \eqref{eq:F}.
We introduce the ground state energy
\begin{equation}\label{eq:gseGL}
\gse (\kappa,H)=
\inf\{\mathcal G(\psi,\mA)~:~(\psi,\mA)\in H^1(\Omega;\C)\times H^1_{\rm div}(\tilde\Omega;\R^2)\}\,,
\end{equation}
where $\mathcal A\in H^1_{\rm div}(\tilde\Omega;\R^2)$ means
\begin{equation}\label{eq:mA}
\mA\in H^1(\tilde\Omega;\R^2)\,,\quad  {\rm div}\mA=0\quad {\rm in}~\tilde\Omega\,,\quad 
\nu\cdot\mA=0\quad{\rm on~}\partial\tilde\Omega\,,
\end{equation}
and $\nu$ is the inward normal vector of $\partial\tilde\Omega$. The property of gauge invariance yields \cite[Sec.~10.1.2]{FH-b}
\[\gse (\kappa,H)=
\inf\{\mathcal G(\psi,\mA)~:~(\psi,\mA)\in H^1(\Omega;\C)\times H^1(\tilde\Omega;\R^2)\}\,.\]

Every minimizing configuration $(\psi,\mathcal A)_{\kappa,H}$ is a critical point of the GL functional, that is it satisfies the following equations:
\begin{equation}\label{eq:Euler-Lag}
\left\{
\begin{array}{lll}
-(\nabla-i\kappa H\mathcal A)^2\psi=\kappa^2(1-|\psi|^2)\psi &\text{in}\:\:\Omega,\\
-\nabla^\bot\Big(\text{curl}(\mathcal A-\Fb)\Big)=\displaystyle\frac{1}{\kappa H}\mathbf 1_{\Omega}\,\text{Im}\:\Big(\overline{\psi}(\nabla -i\kappa H\mathcal A)\psi\Big)&\text{in}\:\: \tilde\Omega\,,\\
\nu\cdot (\nabla-i\kappa H\mathcal A)\psi=0&\text{on}\:\:\partial\Omega,\\
\text{curl}(\mathcal A-\Fb)=0 &\text{on}\:\:\partial\tilde \Omega,
\end{array}\right.
\end{equation}
where $\nabla^\bot=(\partial_{x_2},-\partial_{x_1})$ is the Hodge gradient.\Bk

\subsubsection*{Bulk energy function}\

The GL ground state energy $\gse(\kappa,H)$ in \eqref{eq:gseGL} is closely related to a simplified effective energy, which we will call the bulk energy function. This is the \emph{ concave} function $g:[0,+\infty)\to[-\frac12,0]$ that we will introduce below. First, we set $g(0)=-\frac12$ and $g(b)=0$ for all $b\geq 1$; the definition of $g(b)$ when $b\in(0,1)$ is implicit through the large area limit of a certain non-linear energy \cite{AS, FK-cpde, SS02}.

Let $R>0$ and $Q_R=(-R/2,R/2)\times(-R/2,R/2)$. We define the following Ginzburg-Lamdau energy with the constant magnetic field on $H^1(Q_R)$ by
$$G_{b,Q_R}(u)=\int_{Q_R}\left(b|(\nabla-i {\bf A_0})u|^2-|u|^2+\frac{1}{2}|u|^4\right)dx.$$
Here  ${\bf A_0}$ is the vector field introduced in \eqref{eq:A0}.
We introduce  the two ground state energies
\[
m_0(b,R)=\inf_{u\in H^1_0(Q_R)}G_{b,Q_R}(u),\quad{\rm and}\quad m(b,R)=\inf_{u\in H^1(Q_R)}G_{b,Q_R}(u).
\]
We gather the following remarkable properties (see \cite[Thm.~2.1]{FK-cpde}):
\begin{itemize}
\item If $b\geq 1$ and $R>0$, then $m_0(b,R)=0$\,.
\item $m_0(0,R)=-\frac{R^2}2$\,.
\item Every minimizer $u_{b,R}$ of $m_0(b,R)$ or $m(b,R)$ satisfies the uniform bound $|u_{b,R}|\leq 1$.
\item  For all $b\in [0,\infty)$, the following limits exist 
\[g(b)=\lim_{R\rightarrow\infty}\frac{m_0(b,R)}{R^2}=\lim_{R\rightarrow\infty}\frac{m(b,R)}{R^2}.\]
\item There exist positive constants $C$ and $R_0$, such that, for all $R\geq R_0$ and $b\in [0,1)$,
\begin{equation}\label{eq:g(b)}
g(b)\leq \frac{m_0(b,R)}{R^2}\leq g(b)+\frac{C}{R}\quad\text{and}\quad g(b)-\frac{C}{R}\leq\frac{m(b,R)}{R^2}\leq g(b)+\frac{C}{R}.\end{equation}
\end{itemize}

\subsubsection*{The leading order energy}\

The approximation of the energy $\gse(\kappa,H)$ will require the decomposition of the domain $\Omega$ into small cells, which we describe below and eventually define the leading order energy in \eqref{eq:E-asy}. 

We fix two positive constants $c_1$ and $c_2$ such that $0<c_1<c_2$, and we let $\ell$ be a parameter that varies in the following manner
\begin{equation}\label{eq:ell}
c_1\kappa^{-3/4}\leq \ell\leq c_2\kappa^{-3/4}\,,
\end{equation}
so that $\ell$ approaches $0$ in the regime of large GL parameter $\kappa$. 

Now we set
\begin{equation}\label{eq:x-m,n}
x_{m,n}^\ell :=(\ell m,\ell n)\qquad \big((m,n)\in\Z^2\big)\,,
\end{equation}
\begin{equation}\label{eq:J} 
\mathcal J_\ell=\{x_{m,n}^\ell~:~(m,n)\in\Z^2~\&~ Q_\ell(x_{m,n}^\ell)\subset\Omega\}\,,
\end{equation}
and 
\begin{equation}\label{eq:Om-ell}
\Omega_\ell:=\bigcup_{x\in\mathcal J_\ell}Q_\ell(x)\,,
\end{equation} 
where $Q_\ell(\cdot)$ is the open square introduced in \eqref{eq:Q-ell}. 
The definition of the set $\mathcal J_\ell$ yields that the squares  $\big(Q_\ell(x)\big)_{x\in\mathcal J_\ell}$ are pairwise disjoint, and    $\Omega_\ell\subset\Omega$.  Consequently  the set $\mathcal J_\ell$ is finite, since the domain $\Omega$ is bounded, and its cardinal
\begin{equation}\label{eq:N}
N(\ell):={\rm Card}(\mathcal J_\ell)
\end{equation}
satisfies the obvious upper bound
\begin{equation}\label{eq:N-ub}
N(\ell)\leq |\Omega|\ell^{-2}\,.
\end{equation}
Furthermore, by smoothness and boundedness of the boundary $\partial\Omega$, we can write the following lower bound on the number $N(\ell)$, 
\begin{equation}\label{eq:N-ell=0}
N(\ell)\geq |\Omega|\ell^{-2}-\mathcal O(\ell^{-1})\qquad(\ell\to0_+)\,.
\end{equation}

We demonstrate in Theorem~\ref{thm:GL} below  that the  GL ground state energy, $\gse(\kappa,H)$, introduced in \eqref{eq:gseGL}, is to leading order given by the following energy
\begin{equation}\label{eq:E-asy}
\mathrm E^{\rm asy}(b,\ell)=\ell^2\sum_{x\in\mathcal J_\ell}g\big(bB_{\rm av}^\ell(x)\big)\,,
\end{equation}
where
 $\ell$ and $\mathcal J_\ell$ are introduced in \eqref{eq:ell} and \eqref{eq:J} respectively, and $g(\cdot)$ is the bulk energy function introduced in \eqref{eq:g(b)}.

\begin{thm}\label{thm:GL}
Assume that there exists a positive real number $c$ such that $B\geq c>0$ a.e. in $\Omega$. Given $\epsilon\in(0,1)$ and $c_2>c_1>0$, there exist constants $C,\kappa_0$ such that, for all $\kappa\geq \kappa_0$, $H=b\kappa$, $\ell$ satisfying \eqref{eq:ell},  and $b\in(\epsilon,\epsilon^{-1})$, the following holds
\[
\left|\gse(\kappa,H) -\kappa^2\mathrm E^{\rm asy}(b,\ell) \right|\leq C\kappa^{15/8}\,.
\]
\end{thm}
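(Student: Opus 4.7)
The proof consists of matching upper and lower bounds, both obtained by reducing, on each cell $Q_\ell(x)$ with $x\in\mathcal J_\ell$, to the constant-field Ginzburg--Landau problem with magnetic intensity $B_{\rm av}^\ell(x)$. The key tool is Theorem~\ref{cor-avg} applied with $U=Q_\ell(x)$: it produces a gauge function $\phi^x\in H^1(Q_\ell(x))$ with $\Fb-\nabla\phi^x=\Ab_{\rm new}^{Q_\ell(x)}$ on $Q_\ell(x)$, and quantifies the $L^2$ closeness of $\Ab_{\rm new}^{Q_\ell(x)}$ to the constant-field potential $\Ab_{\rm av}^{Q_\ell(x)}=B_{\rm av}^\ell(x)\Ab_0(\cdot-x)$. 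Throughout, I use $R=\ell\sqrt{\kappa H}\sim\kappa^{1/4}$, $b_x=bB_{\rm av}^\ell(x)\in(c\epsilon,\epsilon^{-1})$, together with the a priori bounds $|\psi|\le 1$ and $\|\curl(\mA-\Fb)\|_{L^2(\tilde\Omega)}=O((\kappa H)^{-1})$ available for minimizers of $\mathcal G$ via \eqref{eq:Euler-Lag}.

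\textbf{Upper bound.} I build $(\psi_{\rm tr},\Fb)$ by picking, on each cell, a Dirichlet minimizer $u_x$ of $m_0(b_x,R)$ and setting
\begin{equation*}
\psi_{\rm tr}(y)=\sum_{x\in\mathcal J_\ell}\mathbf 1_{Q_\ell(x)}(y)\,e^{i\kappa H\phi^x(y)}\,u_x\bigl(\sqrt{\kappa H}(y-x)\bigr),
\end{equation*}
extended by zero outside $\Omega_\ell$, so that $\psi_{\rm tr}\in H^1_0(\Omega_\ell)\subset H^1(\Omega)$ and the curl penalty in $\mathcal G$ vanishes. Gauge invariance rewrites the kinetic density on $Q_\ell(x)$ with $\Ab_{\rm new}^{Q_\ell(x)}$ in place of $\Fb$; a Cauchy--Schwarz expansion combined with Theorem~\ref{cor-avg} and $|u_x|\le 1$ bounds the cost of then replacing $\Ab_{\rm new}^{Q_\ell(x)}$ by $\Ab_{\rm av}^{Q_\ell(x)}$ by $O\bigl((\kappa H)^2\ell^4\|\nabla B\|^2_{L^2(\Omega)}\bigr)+O(\kappa^{3/2})$, where the cross term is controlled by $\sum_x\|\nabla B\|_{L^2(Q_\ell(x))}\le N(\ell)^{1/2}\|\nabla B\|_{L^2(\Omega)}$ and the bulk scale $\|(\nabla-i\kappa H\Ab_{\rm av}^{Q_\ell(x)})u_x(\sqrt{\kappa H}(\cdot-x))\|_{L^2}=O(\kappa\ell)$. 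After rescaling each cell to $Q_R$, the main part equals $\kappa^2\ell^2\,m_0(b_x,R)/R^2$, and the upper inequality in \eqref{eq:g(b)} gives $m_0(b_x,R)/R^2\le g(b_x)+C/R$; summation over $N(\ell)\lesssim\ell^{-2}$ cells contributes $O(\kappa^2\ell^2 N(\ell)/R)=O(\kappa^{7/4})$, and zero-extension across $\Omega\setminus\Omega_\ell$ produces $O(|\Omega\setminus\Omega_\ell|\kappa^2)=O(\kappa^{5/4})$ by \eqref{eq:N-ell=0}. This delivers $\gse(\kappa,H)\le\kappa^2\mathrm E^{\rm asy}(b,\ell)+C\kappa^{15/8}$.

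\textbf{Lower bound.} Starting from a minimizer $(\psi,\mA)$, the a priori bounds allow replacing $\mA$ by $\Fb$ modulo a negligible error. On each cell, after the gauge change $\tilde\psi=e^{-i\kappa H\phi^x}\psi$, a Cauchy--Schwarz with parameter $\varepsilon$ gives
\begin{equation*}
|(\nabla-i\kappa H\Ab_{\rm new}^{Q_\ell(x)})\tilde\psi|^2\ge(1-\varepsilon)|(\nabla-i\kappa H\Ab_{\rm av}^{Q_\ell(x)})\tilde\psi|^2-\varepsilon^{-1}(\kappa H)^2|\Ab_{\rm new}^{Q_\ell(x)}-\Ab_{\rm av}^{Q_\ell(x)}|^2|\tilde\psi|^2,
\end{equation*}
so that Theorem~\ref{cor-avg}, $|\tilde\psi|\le 1$, and summation over cells bound the error by $\varepsilon^{-1}O(\kappa)$. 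The main piece is bounded below, after rescaling, by $(1-\varepsilon)\bigl[\kappa^2\ell^2 g(b_x)-C\kappa^2\ell^2/R\bigr]$ via the Neumann bulk estimate $m(b_x,R)\ge R^2 g(b_x)-CR$ in \eqref{eq:g(b)}. The boundary strip $\Omega\setminus\Omega_\ell$ contributes at least $-\tfrac{\kappa^2}{2}|\Omega\setminus\Omega_\ell|=O(\kappa^{5/4})$ thanks to $|\psi|\le 1$. Summing and choosing $\varepsilon$ optimally (e.g.\ $\varepsilon\sim\kappa^{-1/8}$) yields $\gse(\kappa,H)\ge\kappa^2\mathrm E^{\rm asy}(b,\ell)-C\kappa^{15/8}$; negativity of $g(b_x)$ absorbs the $\varepsilon$-residue in the main term.

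\textbf{Main obstacle.} The delicate step is propagating the single-cell $L^2$-averaging bound of Theorem~\ref{cor-avg} through a quadratic form whose coupling $\kappa H=b\kappa^2$ is enormous. It is essential to first gauge away the non-local gradient part of $\Fb$ on each cell, so that only the $\nabla B$-controlled piece enters, and then to exploit the a priori $L^\infty$ bound on $\psi$ to convert this $L^2$-closeness of potentials into a usable quadratic estimate with a finely tuned splitting parameter $\varepsilon$. The terminal error $\kappa^{15/8}$ comfortably absorbs the three loss mechanisms: $\kappa^{7/4}$ from the convergence $m(b,R)/R^2\to g(b)$, $\kappa^{3/2}$ from the averaging cross term, and $\kappa^{5/4}$ from the boundary strip $\Omega\setminus\Omega_\ell$.
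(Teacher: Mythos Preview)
Your overall strategy---cell decomposition, gauge reduction plus the averaging estimate of Theorem~\ref{cor-avg}, and rescaling to the bulk functionals $m_0(b,R)$ and $m(b,R)$---coincides with the paper's, and your upper-bound argument is essentially the paper's Proposition~\ref{prop:ub-loc}.

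The genuine gap is in the lower bound, at the step ``the a priori bounds allow replacing $\mA$ by $\Fb$ modulo a negligible error''. The curl bound available from $\mathcal G(\psi,\mA)\le 0$ is only $\|\curl(\mA-\Fb)\|_{L^2}=O(H^{-1})$, not $O((\kappa H)^{-1})$; via the curl-div inequality this gives $\|\mA-\Fb\|_{L^2}=O(\kappa^{-1})$, so a direct Cauchy--Schwarz replacement produces an error $\varepsilon^{-1}(\kappa H)^2\|\mA-\Fb\|_{L^2}^2\sim\varepsilon^{-1}\kappa^2$, which is of leading order and cannot be absorbed by any $\varepsilon\in(0,1)$. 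The paper closes this step differently: it invokes the stronger a~priori estimate $\|\mA-\Fb\|_{C^{0,\alpha}(\overline\Omega)}\le C_\alpha/\kappa$ (elliptic regularity for the curl equation, cf.\ \eqref{eq:apriori}) and, crucially, first applies on each cell the \emph{linear} gauge $\phi_{x_0}(x)=(\mA(x_0)-\Fb(x_0))\cdot(x-x_0)$, which removes the constant part of $\mA-\Fb$ and leaves only the H\"older increment $|(\mA-\Fb)(x)-(\mA-\Fb)(x_0)|\le C_\alpha\kappa^{-1}\ell^\alpha$. Without this refinement the lower bound does not close. Two smaller issues: the claim $b_x\le\epsilon^{-1}$ is unjustified for $B\in H^1$ since $B_{\rm av}^\ell(x)$ need not be uniformly bounded---the paper controls this via Lemma~\ref{lem1}, which is what produces the exponent $15/8$ rather than the $7/4$ your error budget suggests; and your scaling $R=\ell\sqrt{\kappa H}$ does not directly yield $G_{b_x,Q_R}$---the paper takes $R=\ell\sqrt{\kappa H\,B_{\rm av}^\ell(x)}$ so that the rescaled cell energy is exactly $\hat b^{-1}G_{\hat b,Q_R}$ with $\hat b=bB_{\rm av}^\ell(x)$.
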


\begin{rem}\label{rem:GL}
Since $g(\cdot)\geq -\frac12$, we get by \eqref{eq:N-ub},
$$-\frac12|\Omega|\leq \ell^2\sum_{x\in\mathcal J_\ell}g\big(bB_{\rm av}^\ell(x)\big)\leq 0\,.$$
Furthermore, since $g(\cdot)$ is { concave}, { $-g(\cdot)$ is convex} and Jensen's inequality yields
$$\ell^2\sum_{x\in\mathcal J_\ell}g\big(bB_{\rm av}^\ell(x)\big)\geq \sum_{x\in\mathcal J_\ell}\int_{Q_\ell(x)} g\big( bB(y)\big)\,dy=\int_{\Omega}g\big(bB(y)\big)\,dy+\mathcal O(\ell)\,.$$
Consequently, we see that, if $b>0$ is a fixed constant (independent from the parameters $\kappa,H,\ell$), the effective energy in \eqref{eq:E-asy}, satisfies {(see Remark~\ref{rem:bB>1a.e.} for additional details)}
\begin{equation}\label{eq:l-o-t=0}
\mathrm E^{\rm asy}(b,\ell)\underset{\ell\to0_+}{=}o(1)\Longleftrightarrow \big|\{y\in\Omega~:~bB(y)<1\}\big| =0\,.
\end{equation}
\end{rem}

\begin{rem}\label{rem:GL*}
We can deduce the eigenvalue upper bound mentioned in Theorem~\ref{thm:evD} from Theorem~\ref{thm:GL}, by using the GL order parameter as a trial state for the Dirichlet eigenvalue.  We present this construction in Sec.~\ref{sec:mag-lap}, which highlights the possibility of extracting spectral asymototics from the study of the  GL model, despite the many existing results that go in the opposite direction, namely studying the GL model starting from eigenvalue estimates of the magnetic Laplacian. 
\end{rem}

\subsection{Organization of the paper}

The paper is organized as follows. Section~\ref{sec:p} contains some standard material that we are going to use through the paper.   Section~\ref{sec:av} contains the proof of the averaging estimate, Theorem~\ref{cor-avg}. The   estimate of the energy in \eqref{eq:qf-g} occupies Section~\ref{sec:qf}. The proof of Theorem~\ref{thm:evD} is given in Section~\ref{sec:mag-lap*}. Section~\ref{sec:GL} is devoted to the study of the Ginzburg-Landau model and ends up by an alternative proof of the eigenvalue upper bound for the Dirichlet magnetic Laplacian 
(Sec.~\ref{sec:mag-lap}).

\section{Preliminaries and notation}\label{sec:p}

The purpose of this section is to introduce the necessary material for the statement of the main theorems  in the subsequent sections.

\subsection*{Asymptotic order}

We will use the standard Landau notation to denote bounded quantities, $\mathcal O(1)$, and vanishingly small quantities, $o(1)$, with respect to a parameter $\sigma$ living in a neighborhood of $+\infty$. Additionally, we use the notation $\approx $ in the following context; given two functions $a(\sigma)$ and $b(\sigma)$, writing  $a\approx b$ means that there exist positive constants $\sigma_0,c_1,c_2$ such that $c_1 b(\sigma)\leq a(\sigma)\leq c_2b(\sigma)$. We use the letter $C$ to denote constants. The value of $C$ might change from one inequality to another without mentioning this explicitly.

\subsection*{The averaged magnetic field}

For all $x\in\R^2$ and $\ell>0$, we introduce the \emph{open} square of center $x$ and side-length $\ell$ as follows
\begin{equation}\label{eq:Q-ell}
Q_\ell(x)=(x-\ell/2,x+\ell/2)\times (x-\ell/2,x+\ell/2)\,.
\end{equation}
We introduce the averaged magnetic field in the square $Q_\ell(x)$,
\begin{equation}\label{eq:mf-av-l}
B_{\rm av}^\ell(x)=\frac1{\ell^2}\int_{Q_\ell(x)}B(y)\,dy\,.
\end{equation}
Note that, if $B$ satisfies the following condition in some open set $\Omega\subset\R^2$,
\begin{equation}\label{eq:B>c}
\exists\,c\in\R\,,\quad B\geq c\quad {\rm a.e.}
\end{equation}
then the averaged magnetic field satisfies
\begin{equation}\label{eq:B-av>c}
B_{\rm av}^\ell(x)\geq c\quad  {\rm whenever }~Q_\ell(x)\subset\Omega\,.
\end{equation}
Assuming \eqref{eq:B}, we will prove that $B_{\rm av}^\ell(x)$ can have only \emph{slow} growth in the small length limit.

 \begin{lem}\label{lem1}
For all  $\zeta\in(0,\frac12]$, there exist $C,\ell_0>0$ such that,   
for all $\ell\in (0,\ell_0)$, $B\in H^1(\R^2)$ and $x\in\R^2$, the following holds,
\[
|B_{\rm av}^\ell(x)|\leq C\ell^{-2\zeta}\|B\|_{H^1(\R^2)}\,.
\]
\end{lem}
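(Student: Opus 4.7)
The plan is to combine Hölder's inequality with the two-dimensional Sobolev embedding $H^{1}(\R^{2})\hookrightarrow L^{p}(\R^{2})$.

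First, I fix $\zeta\in(0,\tfrac12]$ and set $p=1/\zeta$, so that $p\in[2,\infty)$. Hölder's inequality applied to the pair $(p,p')$ with $p'=p/(p-1)$ gives
\[
\Big|\int_{Q_\ell(x)} B(y)\,dy\Big|\leq \|B\|_{L^{p}(Q_\ell(x))}\, |Q_\ell(x)|^{1/p'} = \ell^{2-2/p}\, \|B\|_{L^{p}(Q_\ell(x))}.
\]
Dividing by $\ell^{2}$ and bounding the local $L^p$ norm by the global one, I obtain
\[
|B_{\rm av}^\ell(x)|\leq \ell^{-2/p}\,\|B\|_{L^{p}(\R^2)}=\ell^{-2\zeta}\,\|B\|_{L^{p}(\R^2)}.
\]

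Second, I invoke the Gagliardo--Nirenberg--Sobolev inequality in the plane: for every $p\in[2,\infty)$ there is a constant $C_{p}>0$ with $\|B\|_{L^{p}(\R^{2})}\leq C_{p}\|B\|_{H^{1}(\R^{2})}$ (one way is to write $\|u\|_{L^p}\leq C\|u\|_{L^2}^{2/p}\|\nabla u\|_{L^2}^{1-2/p}$ and then use Young's inequality to pass to the $H^1$ norm). Since $\zeta$ is fixed, the constant $C_{p}=C(\zeta)$ is admissible, and combining the two steps yields the claimed inequality with $C:=C(\zeta)$ and any fixed $\ell_0>0$ (the estimate is uniform in $\ell\in(0,\ell_0)$; the role of $\ell_0$ is merely cosmetic, matching the statements elsewhere in the paper).

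There is no real obstacle here: the only point to note is that one cannot conclude with an $L^{\infty}$ bound because $H^{1}(\R^{2})$ is \emph{not} embedded in $L^{\infty}(\R^{2})$, which is precisely why a loss of $\ell^{-2\zeta}$ with $\zeta>0$ is unavoidable in this argument. The choice $p=1/\zeta$ calibrates the Hölder loss to the prescribed rate, and since $\zeta\leq\tfrac12$ guarantees $p\geq 2$, the Sobolev embedding is available throughout the admissible range of $\zeta$.
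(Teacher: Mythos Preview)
Your argument is correct and is essentially identical to the paper's own proof: both set $p=1/\zeta\ge 2$, apply H\"older's inequality on $Q_\ell(x)$ to extract the factor $\ell^{-2\zeta}$, and then invoke the Sobolev embedding $H^1(\R^2)\hookrightarrow L^p(\R^2)$. Your additional remarks on why $\ell_0$ plays no real role and why an $L^\infty$ bound is unavailable are accurate but go slightly beyond what the paper records.
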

 \begin{proof}
 Notice that,
 \[
 \big| B_{\rm av}^\ell(x)\big|\leq
  \frac{1}{\ell^2}\int_{Q_\ell(x)}|B(y)|dy\,.
 \]
 Let $p=\frac{1}{\zeta}$ and $q=\frac{p}{p-1}$ the H\"older conjugate of $p$. By H\"older's inequality
 $$\int_{Q_\ell(x)}|B(y)|dy\leq 
 |{Q_\ell(x)}|^{1/q} \|B\|_{L^p(\R^2)}\,.
 $$
 Consequently, 
$$\big| B_{\rm av}^\ell(x)\big|\leq  
  \ell^{\frac2q-2} \|B\|_{L^p(\R^2)}
  =\ell ^{-2\zeta} \|B\|_{L^p(\R^2)}\,.
  $$
To finish the proof, we { note that $p\geq 2$} and use the Sobolev embedding of $H^1(\R^2)$ in $L^p(\R^2)$. 
 \end{proof}

\section{Averaging of the magnetic field}\label{sec:av}

The proof of Theorem~\ref{cor-avg} relies on the following proposition.

\begin{prop}\label{prop:B(y)=B(x)}
For every $s\in(0,1)$,  and every  domain $U\subset\R^2$ satisfying \eqref{eq:U}, the following inequality holds,
$$s^2\int_U |B_{s,x_0}(x)-B_{\rm av}^U|^2\,dx\leq 8\delta^2\|\nabla B\|_{L^2(U)}^2\,,$$
where $\delta={\rm diam}(U)$, $B\in H^1(\R^2)$,   $B_{\rm av}^U$ is  introduced in \eqref{eq:B-av} and, for any $x\in U$, 
\[ B_{s,x_0}(x):=B\big(s(x-x_0)+x_0\big)\,.\]
\end{prop}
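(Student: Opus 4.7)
My plan is to reduce the proposition to a Poincaré--Wirtinger inequality on the convex set $U$, after absorbing the factor $s^2$ through a change of variables.

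First I would introduce the affine contraction $T_s \colon \R^2 \to \R^2$, $T_s(x) = s(x-x_0)+x_0 = sx + (1-s)x_0$, whose Jacobian equals $s^2$. Rewriting $B_{s,x_0}(x) = B(T_s(x))$ and setting $y=T_s(x)$ converts the left-hand side into
\[ s^2\int_U |B(T_s(x))-B_{\rm av}^U|^2\,dx = \int_{T_s(U)} |B(y)-B_{\rm av}^U|^2\,dy. \]
Since $T_s(x)$ is a convex combination of $x\in U$ and $x_0\in U$, convexity of $U$ forces $T_s(U)\subset U$, so the domain of integration can only grow by replacing $T_s(U)$ with $U$ itself. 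Hence it suffices to prove the Poincaré-type bound
\[ \int_U |B(y)-B_{\rm av}^U|^2\,dy \le 8\delta^2\|\nabla B\|_{L^2(U)}^2, \]
which does not see the parameter $s$ at all.

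To obtain this Poincaré--Wirtinger inequality on the convex set $U$, I would write $B(y_1)-B_{\rm av}^U = |U|^{-1}\int_U(B(y_1)-B(y_2))\,dy_2$, apply Jensen's inequality, and use the fundamental theorem of calculus along the segment joining $y_1$ and $y_2$ (which lies entirely in $U$ by convexity) to get
\[ |B(y_1)-B(y_2)|^2 \le \delta^2\int_0^1 |\nabla B((1-t)y_1+ty_2)|^2\,dt. \]
Integrating over $y_1,y_2\in U$ reduces the problem to estimating the triple integral $\int_0^1\!\int_U\!\int_U |\nabla B((1-t)y_1+ty_2)|^2\,dy_1\,dy_2\,dt$.

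The main obstacle is that the natural change of variable $z=(1-t)y_1+ty_2$ (with the other endpoint frozen) has Jacobian $(1-t)^2$ or $t^2$, which degenerates at the endpoints $t=0,1$. I would handle this by splitting $t\in[0,1/2]\cup[1/2,1]$: on $[0,1/2]$ I fix $y_2$ and change variable in $y_1$, and on $[1/2,1]$ I fix $y_1$ and change variable in $y_2$. In each case the relevant Jacobian factor is bounded by $4$, and the image lies in $U$ again by convexity, so both halves contribute at most $2|U|\,\|\nabla B\|_{L^2(U)}^2$, giving a total constant of $4$ (so $8$ with comfortable room). Combining the three steps yields the claim.
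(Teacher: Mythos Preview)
Your proof is correct and follows essentially the same route as the paper: Jensen's inequality, the fundamental theorem of calculus along segments in the convex set $U$, and the $[0,1/2]\cup[1/2,1]$ split to avoid the degenerate Jacobian. Your preliminary change of variables $y=T_s(x)$, which eliminates $s$ at the outset and reduces the statement to a clean Poincar\'e--Wirtinger inequality on $U$, is a mild simplification over the paper's version, which instead carries $s$ through and absorbs the factor $s^2$ later via the Jacobian of $x\mapsto m_{s,x_0}(x)+\tau(y-m_{s,x_0}(x))$.
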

\begin{proof}
We will prove Proposition~\ref{prop:B(y)=B(x)} in the special case where $B\in C^1(\R^2)$. The general case follows then by a density argument, using the density of $C^\infty(\R^2)$ in $H^1(\R^2)$ and the Sobolev embedding of $H^1(\R^2)$ in $L^4(\R^2)$.

We start by  noticing that
$$B_{s,x_0}(x)-B_{\rm av}^U=\frac1{|U|}\int_U \big(B_{s,x_0}(x)-B(y)\big)\,dy\,.$$
By Jensen's inequality,
$$\int_U |B_{s,x_0}(x)-B_{\rm av}^U|^2\,dx\leq \frac1{|U|}\int_U \left(\int_U |B_{s,x_0}(x)-B(y)|^2 \,dy\right)dx\,.$$
Now, it is enough to prove the following inequality, 
\begin{equation}\label{eq:R-B-Bav}
\int_U\int_{U}|B_{s,x_0}(x)-B(y)|^2dxdy \leq 8\delta^2|U|\|\nabla B\|_{L^2(U)}^2\,.
\end{equation}
Indeed,  for $y,z\in U$ with $y\not=z$, the  convexity of $U$  ensures that $z+t\frac{y-z}{|y-z|}\in U$ for  $t\in[0,|y-z|]$, hence,
\begin{align*}
    B(z)-B(y)&=-\int_0^{|y-z|}\dfrac{d}{dt}B\left(z+t\frac{y-z}{|y-z|}\right)dt\\
    &=-\int_0^{|y-z|}\nabla B\left(z+t\frac{y-z}{|y-z|}\right)\cdot\dfrac{y-z}{|y-z|}\,dt\,.
    \end{align*}
    Consequently,
 \begin{align*}
   | B(z)-B(y)|&\leq \int_0^{|y-z|}\left|\nabla B\left(z+t\frac{y-z}{|y-z|}\right)\right|\,dt\\
   &=|y-z|\int_0^1 \left|\nabla B\left(z+\tau (y-z)\right)\right|\,d\tau
   \end{align*}
   after performing the change of variable $\tau=t/|y-z|$. By Jensen's inequality, we get further
 \begin{align*}
   | B(z)-B(y)|^2&\leq |y-z|^2\int_0^1\left|\nabla B\left(z+\tau (y-z)\right)\right|^2\,d\tau\\
   &\leq \delta^2  \int_0^1\left|\nabla B\left(z+\tau (y-z)\right)\right|^2\,d\tau\,.
   \end{align*}
   We use the foregoing inequality for $z=m_{s,x_0}(x):=s(x-x_0)+x_0$,  $x\in U$, from which we get 
   \begin{equation}\label{eq:R-Bs-B}
   |B_{s,x_0}(x)-B(y)|^2\leq \delta^2\Big( I_1(x,y;s)+I_2(x,y;s)\Big)\,,
   \end{equation}
   where
   \[ I_1(x,y;s):=\int_0^{1/2}\left|\nabla B\left(m_{s,x_0}(x)+\tau (y-m_{s,x_0}(x))\right)\right|^2\,d\tau \]
   and
   \[ I_2(x,y;s):= \int_{1/2}^{1}\left|\nabla B\left(m_{s,x_0}(x)+\tau (y-m_{s,x_0}(x))\right)\right|^2\,d\tau\,.\]
   For $\tau\in[0,1]$ and $y\in U$, consider the set  $U_{y,\tau}=\{m_{s,x_0}(x)+\tau\big(y-m_{s,x_0}(x)\big)~:~{ x\in U}\}$; since $U$ is  convex, we observe that $U_{y,\tau}\subset U$. Now, performing the change of variable 
   \[x\mapsto a:=m_{s,x_0}(x)+\tau (y-m_{s,x_0}(x))\,,\] we get for all $s\in(0,1)$ and $\tau\in[0,\frac12]$,
   \[ 
    \int_U \left|\nabla B\left(m_{s,x_0}(x)+\tau (y-m_{s,x_0}(x))\right)\right|^2dx
    =\frac1{s^2(1-\tau^2)}\int_{U_{y,\tau}}|\nabla B(a)|^2\,da\\
    \leq \frac4{s^2}\|\nabla B\|_{L^2(U)}^2\,.
    \]
  Integrating again with respect to $y\in U$, we get
  \begin{equation}\label{eq:R-est-I1}
   \int_U \left(\int_U I_1(x,y;s)dx\right)dy\leq \frac4{s^2} |U|\|\nabla B\|_{L^2(U)}^2\,.
  \end{equation} 
  We estimate the integral of $I_2$ in a similar fashion. Doing the change of variable $y\mapsto \tilde a:=m_{s,x_0}(x)+\tau (y-m_{s,x_0}(x)) $, we observe for all $\tau\in[\frac12,1]$, 
\[   \int_U \left|\nabla B\left(m_{s,x_0}(x)+\tau (y-m_{s,x_0}(x))\right)\right|^2dy
    =\frac1{\tau^2}\int_{V_{x,\tau}}|\nabla B(\tilde a)|^2\,d\tilde a\leq 4\|\nabla B\|_{L^2(U)}^2\,,\]  
    where $V_{x,\tau}:=\{m_{s,x_0}(x)+\tau\big(y-m_{s,x_0}(x)\big)~:~y\in U\}\subset U$, since $U$ is convex. After integrating with respect to $x\in U$, we get 
     \begin{equation}\label{eq:R-est-I2}
  \int_U \left(\int_U I_2(x,y;s)dy\right)dx\leq 4|U|\|\nabla B\|_{L^2(U)}^2\,.
  \end{equation} 
  Inserting \eqref{eq:R-est-I1} and \eqref{eq:R-est-I2} into \eqref{eq:R-Bs-B}, we get eventually \eqref{eq:R-B-Bav}, which finishes the proof of Proposition~\ref{prop:B(y)=B(x)}.
 \end{proof}

\begin{proof}[Proof of Theorem~\ref{cor-avg}]
Collecting \eqref{eq:A-new} and \eqref{eq:A-av}, we write, for all $x\in U$,
\[\Ab_{\rm new}^U(x)-\Ab_{\rm av}^U(x)=2\int_0^1\Big( B(s(x-x_0)+x_0) - B_{\rm av}^U\Big)\Ab_0\big(s(x-x_0)\big)\,ds\,.\]
Since $ \big|\Ab_0\big(s(x-x_0)\big)\big|\leq \frac12s|x-x_0|\leq \frac12s\,{\rm diam}(U)$ on $U$, we get by using Jensen's inequality,
\begin{equation}\label{eq:A-Aav}
\forall\,x\in U\,,\quad |\Ab_{\rm new}^U(x)-\Ab_{\rm av}^U(x)|^2\leq \delta^2\int_0^1 |B(s(x-x_0)+x_0)-B_{\rm av}^U|^2 s^2\,ds\,.
\end{equation} 
We apply Proposition~\ref{prop:B(y)=B(x)} to estimate the term in the r.h.s. in \eqref{eq:A-Aav}. This finishes the proof of Theorem~\ref{cor-avg}.
\end{proof}

\begin{rem}\label{rem:cor-J}
If we 
perform the change of variable $y=s(x-x_0)+x_0$ and note that $U$ is convex (which guarantees that $y\in U$, for all $x\in U$), we deduce from \eqref{eq:A-Aav},
\begin{equation}\label{eq:A-Aav*}
\begin{aligned}
\int_U|\Ab_{\rm new}^U(x)-\Ab_{\rm av}^U(x)|^2\,dx&\leq \delta^2\int_0^1\int_U |B(s(x-x_0)+x_0)-B_{\rm av}^U|^2 s^2dx ds\\
&\leq \delta^2\int_U |B(y)-B_{\rm av}^U|^2 dy\,. 
\end{aligned}
\end{equation}
\end{rem}

\section{Approximation of the quadratic form}\label{sec:qf}

Given a bounded open set $U\subset\R^2$, a function $u\in H^1(U)$,  a vector field $\mathbf a\in H^1(U;\R^2)$ and a real number $\sigma$, we introduce  
\begin{equation}\label{eq:qf-loc}
q_\sigma\big(u,\mathbf a;U\big)=\int_{U} |(\nabla -i\sigma\mathbf a)u|^2\,dx\,.
\end{equation}

\begin{prop}\label{prop:app-qf}
Given $\eta,\rho\in(0,\frac12)$ and $0<c_1<c_2$, there exist  constants $C',\sigma_0>0$ such that the following is true. If
\begin{itemize}
\item  $\sigma\geq \sigma_0$\,;
\item $U\subset\R^2$ is open and convex\,;
\item $c_1\sigma^{-\rho}\leq {\rm diam}(U),|U|^{1/2}\leq c_2\sigma^{-\rho}$
\item $u\in H^1(U) \cap L^\infty(U)$,  $B\in H^1(\R^2)$ \& $\Ab$ defined by \eqref{eq:mag-pot}\,,
\end{itemize}
then there exists a function $\varphi:=\varphi^U\in H^1(U)$ such that 
\begin{multline*}
(1-\sigma^{-\eta})q_\sigma (v,\Ab_{\rm av}^U;U)-C'\sigma^{2-4\rho+\eta}\|\nabla B\|_{L^2(U)}^2{ \|u\|_{L^\infty(U)}^2}\\
\leq  q_\sigma(u,\Ab;U)\leq (1+\sigma^{-\eta})q_\sigma (v,\Ab_{\rm av}^U;U)+C'\sigma^{2-4\rho+\eta}\|\nabla B\|_{L^2(U)}^2{\|u\|_{L^\infty(U)}^2}
\end{multline*}
where $\Ab_{\rm av}^U$ is introduced in \eqref{eq:A-av} and $v=e^{i\sigma \varphi}u$.
\end{prop}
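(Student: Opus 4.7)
\smallskip

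\noindent\textbf{Proof plan for Proposition~\ref{prop:app-qf}.}
The plan is to combine a gauge transformation (which replaces $\Ab$ by $\Ab_{\rm new}^U$ without cost) with an expansion of the magnetic gradient around $\Ab_{\rm av}^U$, and then feed the difference $\Ab_{\rm new}^U-\Ab_{\rm av}^U$ into Theorem~\ref{cor-avg} to absorb the non-uniformity of $B$.

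First, I would invoke \eqref{eq:A-new=A}, which supplies $\varphi^U\in H^1(U)$ with $\Ab=\Ab_{\rm new}^U-\nabla\varphi^U$ on $U$. Setting $v=e^{i\sigma\varphi^U}u$, the standard gauge identity
\[
(\nabla-i\sigma\Ab_{\rm new}^U)v=e^{i\sigma\varphi^U}(\nabla-i\sigma\Ab)u
\]
is a pointwise equality, so $|v|=|u|$ (hence $\|v\|_{L^\infty(U)}=\|u\|_{L^\infty(U)}$) and
\[
q_\sigma(u,\Ab;U)=q_\sigma(v,\Ab_{\rm new}^U;U).
\]
This is the $\varphi$ announced in the statement, and it reduces the problem to comparing $q_\sigma(v,\Ab_{\rm new}^U;U)$ with $q_\sigma(v,\Ab_{\rm av}^U;U)$.

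Second, I would write $W:=\Ab_{\rm new}^U-\Ab_{\rm av}^U$, so that $(\nabla-i\sigma\Ab_{\rm new}^U)v=(\nabla-i\sigma\Ab_{\rm av}^U)v-i\sigma Wv$. Expanding the squared modulus and applying Cauchy--Schwarz followed by Young's inequality with parameter $\epsilon=\sigma^{-\eta}$ to the cross term gives, after integrating over $U$,
\[
(1-\sigma^{-\eta})\,q_\sigma(v,\Ab_{\rm av}^U;U)-C\sigma^{2+\eta}\!\int_U|W|^2|v|^2\,dx
\leq q_\sigma(v,\Ab_{\rm new}^U;U),
\]
and the symmetric upper bound with $(1+\sigma^{-\eta})$. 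The precise constant $C$ comes out of $1+\epsilon^{-1}\asymp \sigma^{\eta}$.

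Third, I would estimate the remainder: since $|v|=|u|$,
\[
\int_U|W|^2|v|^2\,dx\leq \|u\|_{L^\infty(U)}^2\int_U|\Ab_{\rm new}^U-\Ab_{\rm av}^U|^2\,dx\leq 8\delta^4\|\nabla B\|_{L^2(U)}^2\|u\|_{L^\infty(U)}^2,
\]
where the last inequality is Theorem~\ref{cor-avg}. Since $\delta={\rm diam}(U)\leq c_2\sigma^{-\rho}$, we get $\delta^4\leq c_2^4\sigma^{-4\rho}$, so the remainder is bounded by $C'\sigma^{2-4\rho+\eta}\|\nabla B\|_{L^2(U)}^2\|u\|_{L^\infty(U)}^2$, which matches the claim after combining with the previous step.

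There is no serious obstacle here: the work was already done in Theorem~\ref{cor-avg}; the present proposition is essentially a gauge transformation plus a Cauchy--Schwarz/Young splitting tuned by the parameter $\eta$. The only point to keep track of is that $\eta$ must be chosen \emph{independently} of $\sigma$ so that $(1\pm\sigma^{-\eta})$ and $\sigma^{\eta}$ both appear as in the statement, and that the constant $C'$ depends on $c_1,c_2,\eta,\rho$ (but not on $u$, $B$, or $\sigma$), which is automatic from the derivation above.
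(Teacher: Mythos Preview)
Your proposal is correct and follows essentially the same approach as the paper: gauge away $\nabla\varphi^U$ via \eqref{eq:A-new=A}, split $(\nabla-i\sigma\Ab_{\rm av}^U)v-i\sigma(\Ab_{\rm new}^U-\Ab_{\rm av}^U)v$ using the weighted Cauchy inequality with parameter $\sigma^{-\eta}$, and then apply Theorem~\ref{cor-avg} together with $\delta\leq c_2\sigma^{-\rho}$ to control the remainder. The paper even arrives at the same explicit constant $C'=16c_2^4$.
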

Later in the proof of Theorem~\ref{thm:evD}, we use the upper bound in Proposition~\ref{prop:app-qf} to compute the energy of a quasi-mode.

\begin{rem}\label{rem:scaling-lm}
The condition $\rho\in(0,\frac12)$ is a consequence of a scaling argument.  Since $x_0\in U$ and ${\rm diam}(U)\approx\sigma^{-\rho}$, we have $U\subset \{|x-x_0|\leq \mathcal O(\sigma^{-\rho})\}$. The change of variable, $y=\sigma^{1/2}(x-x_0)$ yields (see \eqref{eq:A-av})
\[q_\sigma (v,\Ab_{\rm av}^U;U)=\sigma\int_{\tilde U_\sigma}|(\nabla -B_{\rm av}^U\Ab_0)\tilde v|^2\,dy\,,\]
where $\tilde U_\sigma=\{y=\sigma^{1/2}(x-x_0),~x\in U\}\subset\{|y|\leq \mathcal O(\sigma^{\frac12-\rho})\}$ and $\tilde v(y)=v(x)$. To ensure that $\tilde U_\sigma$ approaches $\R^2$ (which is a fixed domain),  we impose the condition $\rho\in(0,\frac12)$.
\end{rem}

\begin{proof}[Proof of Proposition~\ref{prop:app-qf}]
Note that the following holds:
\begin{itemize}
\item[i.] (Gauge transformation) if  $v=e^{-i\sigma \phi}u$, then 
$q_{\sigma}(v,\Ab^U_{\rm av},U)=q_{\sigma}(v,\Ab^U_{\rm av}-\nabla\phi,U)$\,;
\item[ii.] (Cauchy's inequality) for every $a,b,\sigma>0$, 
$(a+b)^2\leq (1+\sigma^{-\eta})a^2+(1+\sigma^{\eta})b^2$\,;
\item[iii.] Theorem~\ref{cor-avg}\,;
\item[iv.] $\delta:={\rm diam}(U)$ satisfies $\delta^4\leq c_2^4\sigma^{-4\rho}$\,.
\end{itemize}
Now we write
  \begin{align*}
   q_{\sigma}(u,\Ab;U)&:=\int_U\left|(\nabla-i\sigma \Ab)u\right|^2dx\\
    &= \int_U\left|\left(\nabla-i\sigma (\Ab^U_{\rm new}-\Ab^U_{\rm av}+\Ab^U_{\rm av}-\nabla\phi)\right)u\right|^2dx\\
    &\overset{\rm i.}{=}\int_U\left|(\nabla-i\Ab^U_{\rm av})v-i\sigma(\Ab^U_{\rm new}-\Ab^U_{\rm av})u\right|^2dx\\
    &\overset{\rm ii.}{\leq} (1+\sigma^{-\eta})\int_U\left|(\nabla-i\sigma \Ab^U_{\rm av})v\right|^2dx+(1+\sigma^{\eta})\sigma^2\int_U\left|(\Ab^U_{\rm new}-\Ab^U_{\rm av})u\right|^2dx\\
    &\overset{\rm iii.}{\leq}  (1+\sigma^{-\eta})\int_U\left|(\nabla-i\sigma \Ab^U_{\rm av})v\right|^2dx+8\sigma^2(1+\sigma^{\eta})\delta^4\|\nabla B\|_{L^2(U)}^2\|u\|_{L^\infty(U)}^2\\
    &\overset{\rm iv.}{\leq}(1+\sigma^{-\eta})q_{\sigma}(v,\Ab^U_{\rm av};U)+8\sigma^2(1+\sigma^{\eta})c_2^4\sigma^{-4\rho}\|\nabla B\|_{L^2(U)}^2\|u\|_{L^\infty(U)}^2\\
    &\leq (1+\sigma^{-\eta})q_{\sigma}(v,\Ab^U_{\rm av};U)+C'\sigma^{2-4\rho+\eta}\|\nabla B\|_{L^2(U)}^2\|u\|_{L^\infty(U)}^2\,,
\end{align*}
with $C'=16c_2^4$. 
A similar argument yields 
$$(1-\sigma^{-\eta})q_\sigma (v,\Ab_{\rm av}^U;U)-C'\sigma^{2-4\rho+\eta}\|\nabla B\|_{L^2(U)}^2\|u\|_{L^\infty(U)}^2
\leq  q_\sigma(u,\Ab;U)\,.$$
\end{proof}

\section{Magnetic Laplacian}\label{sec:mag-lap*}

The aim of this section is to prove Theorem~\ref{thm:evD}, which is concerned with the principal eigenvalue of the magnetic Laplacian
\begin{equation}\label{eq:magLap}
\Delta_{\sigma \Ab}=-(\nabla-i\sigma\Ab)^2
\end{equation}
with domain (when $\Omega\subset\R^2$ is bounded and with a smooth $C^2$ boundary)
\begin{equation}\label{eq:dom}
\mathcal D=H^2(\Omega)\cap H^1_0(\Omega)\,.
\end{equation}
The operator $\Delta_{\sigma\Ab}$ is  self-adjoint  in the Hilbert space $L^2(\Omega)$ and its principal eigenvalue  is  introduced in \eqref{eq:evD}.

\subsection{Upper bound}

We will construct a trial state by means of a Gaussian function, but localized near a point $x_\varepsilon\in\Omega$ such that
the Lebesgue differentiation theorem holds for $B(x)$ and $|\nabla B(x)|$ at $x_\varepsilon$, and as $\varepsilon\to0_+$,
$B(x_\varepsilon)=m_0(B;\Omega)+\mathcal O(\varepsilon)$,
where $m_0(B;\Omega)$ is the essential infimum introduced in \eqref{eq:ess-inf}. 

 By the Lebesgue differentiation theorem, the two sets
\begin{align*}
&N=\{u\in\Omega,~\lim_{\ell\to0+}\frac1{|D(u,\ell)|} \int_{D(u,\ell)} |\nabla B(x)|^2dx\not= |\nabla B(u)|^2\}\\
&\tilde N= \{u\in\Omega,~\lim_{\ell\to0+}\frac1{|D(u,\ell)|} \int_{D(u,\ell)} B(x)dx\not= B(u)\}
\end{align*}
have zero Lebesgue measure, where  $D(u,\ell)$ denotes the open disk of center $u$ and radius $\ell$.

We assume that $m_0(B;\Omega)>0$. For all $\varepsilon\in(0,1]$, we introduce the set
\[M_\varepsilon=\{x\in\Omega,~ m_0(B;\Omega)\leq B(x)\leq m_0(B;\Omega)+\varepsilon\}\,.\]
Since the set $M_\varepsilon$ has a non-zero Lebesgue measure, $M_\varepsilon\not\subset N\cup\tilde N$, so we get by the Lebesgue differentiation theorem
\begin{multline}\label{eq:B-gB-av}
\exists\,x_\varepsilon\in M_\varepsilon\,,\quad \frac1{|D(x_\varepsilon,\ell)|} \int_{D(x_\varepsilon,\ell)} |\nabla B(x)|^2dx\underset{\ell\to0_+}{\longrightarrow} |\nabla B(x_\varepsilon)|^2<+\infty\\
{\rm and}\quad \quad \frac1{|D(x_\varepsilon,\ell)|} \int_{D(x_\varepsilon,\ell)} B(x) dx\underset{\ell\to0_+}{\longrightarrow} B(x_\varepsilon)<+\infty\,.
\end{multline}
In the sequel, $\rho\in(0,\frac12)$ and $U:=D(x_\varepsilon,\sigma^{-\rho})\subset\Omega$ for $\sigma$ sufficiently large. Let $\varphi:=\varphi^U$ be the gauge function in Proposition~\ref{prop:app-qf}. Consider the  trial state $u(x)=e^{-i\sigma \varphi}v(x)$, with
$v$ the following Gaussian,
\[v(x)=\pi^{-1/2}\left(B_{\rm av}^U\right)^{1/4}\sigma^{1/2} \chi\big(\sigma^\rho(x-x_\varepsilon)\big) \exp\left(-\frac12\big(B_{\rm av}^U\big)^{1/2} \sigma |x-x_\varepsilon|^2\right)\,,\]
where $\chi\in C_c^\infty\big(\R^2;[0,1]\big)$ is supported in the unit disk and equal to $1$ on $\{|x|\leq \frac12\}$.
 By a change of variable, we see that
 \[\|u\|_{L^2(U)}^2=\|v\|_{L^2(U)}^2=1+o(\sigma^{-1})\]
and
\[ q_\sigma(v,\Ab_{\rm av}^U;U)=B_{\rm av}^U\,\sigma+o(\sigma)\,. \]
Note that $\|u\|_{L^\infty(U)}^2=\pi^{-1}(B_{\rm av}^U)^{1/2}\sigma=\mathcal O(\sigma)$ by \eqref{eq:B-gB-av}. We deduce from Proposition~\ref{prop:app-qf},
\[ \frac{q_\sigma(u,\Ab;U)}{\|u\|_{L^2(U)}^2}\leq (1+\sigma^{-\eta})\sigma B_{\rm av}^U+\mathcal O\big(\|\nabla B\|_{L^2(U)}^2\sigma^{3-4\rho+\eta}\big)\,.\]  
Let us choose $\rho=3/8$ and $\eta=1/8$. Since $U=D(x_\varepsilon,\sigma^{-\rho})$, the error term can be expressed in the following pleasant form
\[ \|\nabla B\|_{L^2(U)}^2\sigma^{3-4\rho+\eta}=\frac1{|U|}\|\nabla B\|_{L^2(U)}^2\pi \sigma^{3-6\rho+\eta}=
\frac1{|U|}\|\nabla B\|_{L^2(U)}^2\pi\sigma^{7/8}\,.\]
So  we infer from \eqref{eq:B-gB-av} that
\[ \]
\[ \frac{q_\sigma(u,\Ab;U)}{\|u\|_{L^2(U)}^2}\leq (1+\sigma^{-\eta})\sigma B(x_\varepsilon) +o(\sigma)\,.\]  
 Since $u$ is supported in $U$, we deduce from the min-max principle \eqref{eq:evD},
\[\lambda(\sigma,\Ab;\Omega)\leq B(x_\varepsilon) \sigma+o(\sigma)\overset{(x_\varepsilon\in M_\varepsilon)}{ \leq} (m_0(B;\Omega)+\varepsilon)\sigma +o(\sigma)\,.\] 
Taking the successive limits, as $\sigma\to+\infty$ then as $\varepsilon\to0_+$, we get
\begin{equation}\label{eq:ev-ub-DM}
\limsup_{\sigma\to+\infty}\frac{\lambda(\sigma,\Ab;\Omega)}{\sigma}\leq m_0(B;\Omega)\,.
\end{equation}
\subsection{Lower bound}

The lower bound in Theorem~\ref{thm:evD} is non-asymptotic and does not require the hypothesis that the essential infimum is strictly positive.

\begin{prop}\label{prop:evD-lb}
Let $\Ab\in H^1(\R^2;\R^2)$ and $B=\curl\Ab$. For all $u\in C_c^\infty(\Omega)$ and $\sigma>0$, the following lower bound holds
\[\int_\Omega|(\nabla-i\sigma\Ab)u|^2\,dx\geq \sigma\int_\Omega B(x)|u(x)|^2\,dx\,.\] 
\end{prop}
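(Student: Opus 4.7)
The strategy is the classical \emph{magnetic factorization} (Pauli-type) identity: in two dimensions, the magnetic Laplacian factorizes as the product of a first-order operator and its adjoint, up to the term $\sigma B$. Introduce the first-order magnetic derivatives
\[
D_j := \partial_j - i\sigma A_j \qquad (j=1,2)
\]
so that
\[
\int_\Omega |(\nabla-i\sigma\Ab)u|^2\,dx = \int_\Omega \bigl(|D_1 u|^2 + |D_2 u|^2\bigr)\,dx,
\]
and define the (weighted) Cauchy--Riemann-type operator
\[
L := D_1 + i D_2 = (\partial_1 + i\partial_2) - i\sigma (A_1 + iA_2).
\]

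Next I would verify the pointwise commutator identity $[D_1, D_2] = -i\sigma B$, which follows directly from expanding $D_1 D_2 u - D_2 D_1 u$ and using $B = \partial_1 A_2 - \partial_2 A_1$. Expanding the composition $(D_1 - iD_2)(D_1 + iD_2) = D_1^2 + D_2^2 + i[D_1, D_2]$ then gives the algebraic identity
\[
-(D_1^2 + D_2^2) = -(D_1 - iD_2)(D_1 + iD_2) + \sigma B.
\]
Since $u\in C_c^\infty(\Omega)$, I can integrate by parts with no boundary contribution: noting that the $L^2(\Omega)$-adjoint of $D_j$ is $-D_j$ (as $A_j$ is real-valued and multiplication is self-adjoint), one obtains $(D_1 + iD_2)^* = -(D_1 - iD_2)$, so that
\[
\int_\Omega \bigl(|D_1 u|^2 + |D_2 u|^2\bigr)\,dx
 = \int_\Omega |Lu|^2\,dx + \sigma\int_\Omega B(x)|u(x)|^2\,dx.
\]
Dropping the non-negative term $\int_\Omega |Lu|^2\,dx$ yields the desired inequality.

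The only delicate point is making the integrations by parts rigorous under the mere regularity $\Ab\in H^1(\R^2;\R^2)$ (so $B \in L^2$). Since $u$ is smooth with compact support in $\Omega$, the products $A_j u$, $(\partial_j A_k)u$ and $|A|^2 |u|^2$ are all in $L^1$, so a standard approximation argument (mollifying $\Ab$ to a smooth vector field $\Ab^{(n)}$ with $\curl \Ab^{(n)} = B^{(n)}\to B$ in $L^2_{\mathrm{loc}}$, performing the factorization identity for the smooth approximation, and then passing to the limit using $u\in C_c^\infty(\Omega)$) handles any regularity concerns. This is the main (mild) technical step; the algebraic identity itself is the heart of the argument.
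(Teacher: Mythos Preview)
Your proof is correct and follows essentially the same strategy as the paper: establish the inequality for smooth magnetic potentials via the Pauli/Cauchy--Riemann factorization, then pass to the general case $\Ab\in H^1(\R^2;\R^2)$ by mollification. The only difference is cosmetic: the paper cites the smooth-case inequality as a black box (\cite[Lem.~1.4.1]{FH-b}) and then carries out the approximation argument explicitly, whereas you write out the factorization identity and leave the approximation step as a remark; both arrive at the same place.
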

\begin{proof}
Consider a sequence $(\Ab_n)_{n\geq 1}\subset C^\infty(\R^2;\R^2)$ such that $\Ab_n\to\Ab$ in $H^1(\R^2;\R^2)$. For all $n\geq 1$, let $B_n=\curl \Ab_n$. Note that $B_n\to B$ in $L^2(\R^2)$.

Fix $u\in C_c^\infty(\Omega)$. Since $\Ab_n$ is smooth, we have (see \cite[Lem.~1.4.1]{FH-b})
\[\int_\Omega|(\nabla-i\sigma\Ab_n)u|^2\,dx\geq \sigma \int_\Omega B_n(x)|u(x)|^2\,dx\,.
\]
It is easy to check that
\begin{multline*}\lim_{n\to+\infty} \int_\Omega|(\nabla-i\sigma\Ab_n)u|^2\,dx= \int_\Omega|(\nabla-i\sigma\Ab)u|^2\,dx\\
\quad{\rm and}\quad \lim_{n\to+\infty}\int_\Omega B_n(x)|u(x)|^2\,dx=\int_\Omega B(x)|u(x)|^2\,dx\,.
\end{multline*}
In fact, 
\[\Big|\|(\nabla-i\sigma\Ab_n)u\|_{L^2(\Omega)} -\|(\nabla-i\sigma\Ab)u\|_{L^2(\Omega)}\Big|\leq 
\sigma\|\Ab_n-\Ab\|_{L^4(\Omega)}\|u\|_{L^4(\Omega)}
\]
and
\[\left|\int_\Omega \big( B_n(x)-B(x)\big)|u(x)|^2\,dx\right|\leq \|B_n-B\|_{L^2(\Omega)}\|u\|_{L^4(\Omega)}^2\,.\]
\end{proof}

\subsection{Proof of Theorem~\ref{thm:evD}}

Collect \eqref{eq:ev-ub-DM} and Proposition~\ref{prop:evD-lb}.

\section{The Ginzburg-Landau model}\label{sec:GL}

This section is devoted to the proof of Theorem~\ref{thm:GL}. Also, in Sec.~\ref{sec:mag-lap}, we use Theorem~\ref{thm:GL} to give a new proof of Theorem~\ref{thm:evD}.

\subsection{Lower bound of GL energy}

In the sequel,  $(\psi,\mA)_{\kappa,H}$ denotes a configuration in the space $H^1(\Omega;\C)\times H^1(\Omega;\R^2)$ such that 
$$\mathcal G(\psi,\mA)=\gse (\kappa,H)\,.$$

Our aim is to prove the following proposition.

\begin{prop}\label{prop:lb-loc}
Given $\epsilon\in(0,1)$, there exist $C,\kappa_0>0$ such that the following inequality holds
$$\mathcal G_0(\psi,\mathcal A;Q_\ell(x_0))\geq   g\big(b B_{\rm av}^\ell(x_0)\big)\kappa^2\ell^2-C\Big(\kappa^{15/8}\ell^2+\kappa^{3/2}\|\nabla B\|_{L^2(Q_\ell(x_0)}^2\Big)\,,$$
where
\begin{itemize}
\item $x_0\in\mathcal J_\ell$\,;
\item $\ell=\kappa^{-3/4}$\,;
\item $(\psi,\mathcal A)_{\kappa,H}$ is a minimizer of the GL functional\,;
\item $H=b\kappa$ and $b\in(\epsilon,\frac1\epsilon)$\,;
\item $\mathcal G_0(\psi,\mathcal A;Q_\ell(x_0))=\displaystyle\int_{Q_\ell(x_0)} \left(|(\nabla-i\kappa H\mathcal A)\psi|^2-\kappa^2|\psi|^2+\frac{\kappa^2}{2}|\psi|^4\right)\,dx$\,.
\end{itemize}
\end{prop}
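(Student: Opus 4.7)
My plan is to lower-bound $\mathcal{G}_0(\psi,\mathcal{A};Q_\ell(x_0))$ by reducing it, through a cascade of gauge transformations and rescalings, to the infimum of a constant-field Ginzburg-Landau functional on a cube of side $R\asymp\sqrt{\kappa H}\,\ell\asymp\kappa^{1/4}$; the main term $g(bB_{\rm av}^\ell(x_0))\kappa^2\ell^2$ then appears directly from the definition of $g$ in \eqref{eq:g(b)}. The reduction proceeds in four stages: collect a priori bounds on the minimizer; replace $\mathcal{A}$ by $\Ab$ up to a gauge; apply Proposition~\ref{prop:app-qf} to pass from $\Ab$ to the averaged constant-field potential $\Ab_{\rm av}^{Q_\ell(x_0)}$; finally rescale and invoke \eqref{eq:g(b)}.

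For the first stage, the standard maximum principle applied to the first equation in \eqref{eq:Euler-Lag} gives $\|\psi\|_{L^\infty(\Omega)}\le 1$; Coulomb gauge on $\mathcal{A}$ together with elliptic regularity applied to the second Euler-Lagrange equation then yields $\|\mathcal{A}-\Fb\|_{L^\infty(\tilde\Omega)}\le C(\kappa H)^{-1}$, and a Caccioppoli-type argument on a slightly enlarged square produces the local kinetic bound $\int_{Q_\ell(x_0)}|(\nabla-i\kappa H\mathcal{A})\psi|^2\,dx\le C\kappa^2\ell^2$. Using $\Ab=\Fb+\nabla\vartheta$ from \eqref{eq:A=F} and setting $\tilde\psi=e^{i\kappa H\vartheta}\psi$, the second stage follows from a Cauchy-type inequality with parameter $\sigma^{-\eta}$ (where $\sigma=\kappa H=b\kappa^2$):
\[\int_{Q_\ell(x_0)}|(\nabla-i\kappa H\mathcal{A})\psi|^2\,dx\ge(1-\sigma^{-\eta})\int_{Q_\ell(x_0)}|(\nabla-i\kappa H\Ab)\tilde\psi|^2\,dx-C\sigma^{2+\eta}\|\mathcal{A}-\Fb\|_{L^\infty}^2\ell^2,\]
whose additive error is $O(\kappa^{2\eta}\ell^2)$, negligible in our budget.

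For the third stage I invoke Proposition~\ref{prop:app-qf} with $\rho=3/8$ (so that $\sigma^{-\rho}\asymp\ell$) and a small $\eta\in(0,1/4)$, producing a gauge function $\varphi^{Q_\ell(x_0)}$ and a transformed state $w=e^{i\sigma\varphi^{Q_\ell(x_0)}}\tilde\psi$ such that the new kinetic energy is bounded below, modulo a factor $1-\sigma^{-\eta}$, by $\int_{Q_\ell(x_0)}|(\nabla-i\kappa H\Ab_{\rm av}^{Q_\ell(x_0)})w|^2\,dx$, with an additive error $\sigma^{2-4\rho+\eta}\|\nabla B\|_{L^2(Q_\ell(x_0))}^2=O(\kappa^{1+2\eta}\|\nabla B\|_{L^2}^2)$; the associated multiplicative loss is absorbed via the local kinetic bound as $\sigma^{-\eta}\cdot C\kappa^2\ell^2=O(\kappa^{1/2-2\eta})$. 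For the fourth stage, the double rescaling $y=\sqrt\sigma(x-x_0)$ then $z=\sqrt{B_{\rm av}^\ell(x_0)}\,y$ (legitimate since the hypothesis $B\ge c>0$ gives $B_{\rm av}^\ell(x_0)\ge c$) transforms $\mathcal{G}_0$, modulo the tracked errors, into $\frac{1}{bB_{\rm av}^\ell(x_0)}G_{bB_{\rm av}^\ell(x_0),Q_R}(\hat w)$ with $R=\sqrt{bB_{\rm av}^\ell(x_0)}\,\kappa\ell\asymp\kappa^{1/4}$. Bounding below by $m(bB_{\rm av}^\ell(x_0),R)\ge R^2 g(bB_{\rm av}^\ell(x_0))-CR$ from \eqref{eq:g(b)} (applicable since $bB_{\rm av}^\ell(x_0)$ lies in a compact subinterval of $(0,\infty)$ by the boundedness of $b$, $B$ and the positivity hypothesis) and undoing the scaling yields the main term $g(bB_{\rm av}^\ell(x_0))\kappa^2\ell^2$ together with an error $O(\kappa^2\ell^2/R)=O(\kappa^{1/4})$.

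The principal obstacle is orchestrating the choice of $\eta$ so that the four errors---$\sigma^{-\eta}\kappa^2\ell^2$, $\sigma^{1/2+\eta}\|\nabla B\|_{L^2}^2$, $\kappa^{2\eta}\ell^2$, and the scaling error $O(\kappa^{1/4})$---fit simultaneously inside the stated budget $C(\kappa^{15/8}\ell^2+\kappa^{3/2}\|\nabla B\|_{L^2}^2)=C(\kappa^{3/8}+\kappa^{3/2}\|\nabla B\|_{L^2}^2)$. The first two constraints force $1/2-2\eta\le 3/8$ and $1+2\eta\le 3/2$, pinning $\eta\in[1/16,1/4]$; any such choice (for instance $\eta=1/8$) closes the argument. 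A subsidiary difficulty is the sharp $L^\infty$ control of $\mathcal{A}-\Fb$ used in the second stage, which requires a bootstrap from the second Euler-Lagrange equation combined with Sobolev embeddings on $\tilde\Omega$, but is by now standard material in Ginzburg-Landau theory.
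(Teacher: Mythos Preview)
Your second stage contains a genuine gap. The claimed bound $\|\mathcal A-\Fb\|_{L^\infty(\tilde\Omega)}\le C(\kappa H)^{-1}$ is too strong by a factor of $\kappa$: elliptic regularity on the second equation in \eqref{eq:Euler-Lag} together with the energy bound $\|(\nabla-i\kappa H\mathcal A)\psi\|_{L^2(\Omega)}\le C\kappa$ only gives $\|\mathcal A-\Fb\|_{C^{0,\alpha}(\overline{\tilde\Omega})}\le C_\alpha/\kappa$ (this is exactly \eqref{eq:apriori}). With the correct bound, your additive error in stage~2 becomes
\[
\sigma^{2+\eta}\,\|\mathcal A-\Fb\|_{L^\infty}^2\,\ell^2 \;\approx\; \kappa^{4+2\eta}\cdot\kappa^{-2}\cdot\kappa^{-3/2}=\kappa^{1/2+2\eta},
\]
which exceeds the budget $\kappa^{15/8}\ell^2=\kappa^{3/8}$ for every $\eta>0$. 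Hence the reduction from $\mathcal A$ to $\Ab$ as written does not close.

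The missing idea is an extra \emph{linear} gauge that subtracts the value of $\mathcal A-\Fb$ at the center: set $\phi_{x_0}=\big(\mathcal A(x_0)-\Fb(x_0)\big)\cdot(x-x_0)$ and work with $\mathcal A^{\rm new}=\mathcal A-\nabla\phi_{x_0}$. Then on $Q_\ell(x_0)$ the H\"older estimate (not merely $L^\infty$) gives $|\mathcal A^{\rm new}-\Fb|\le C_\alpha\kappa^{-1}\ell^\alpha$, and the extra factor $\ell^{2\alpha}=\kappa^{-3\alpha/2}$ brings the error down to $\kappa^{1/2+2\eta-3\alpha/2}$, which fits the budget for, say, $\alpha=5/6$. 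This is precisely how the paper proceeds (see \eqref{eq:phi-x0}--\eqref{eq:en-lb-rest}). A smaller but related issue is your assertion that ``$B$ is bounded'': under the sole hypothesis $B\in H^1(\R^2)$ this fails, and the upper control of $B_{\rm av}^\ell(x_0)$ (needed for the scaling error $CR/\hat b$ in stage~4) comes instead from Lemma~\ref{lem1}, which yields only the mild growth $|B_{\rm av}^\ell(x_0)|\le C\ell^{-2\zeta}$.
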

\begin{proof}
First we notice the  useful inequalities (see \cite[Prop.~4.1\,\&\,Thm.~4.2]{AK})
\begin{equation}\label{eq:apriori}
\|\psi\|_{L^\infty(\Omega)}\leq 1\,,\quad \|(\nabla-i\kappa H\mathcal A)\psi\|_{L^2(\Omega)}\leq |\Omega|\kappa\,,\quad\| \mathcal A-\Fb\|_{C^{0,\alpha}(\overline{\Omega})}\leq \frac{C_\alpha}{\kappa}\,,
\end{equation}
where $\alpha\in(0,1)$ can be chosen in an arbitrary manner.

We set 
\begin{equation}\label{eq:phi-x0}
\phi_{x_0}:=\Big(\mathcal A(x_0)-\Fb(x_0)\Big)\cdot(x- x_0)\quad {\rm and}\quad \mathcal A^{\rm new}=\mathcal A-\nabla\phi_{x_0}\,.
\end{equation}
It is easy to check that
\begin{equation}\label{eq:E=Enew}
\mathcal{G}_0(\psi,\mathcal{A};Q_\ell(x_0))=\mathcal{G}_0(u,\mathcal{A}^{\rm new};Q_\ell(x_0))
\end{equation}
where
\begin{equation}\label{eq:fct-u}
u(x)=e^{-i\kappa H\phi_{x_0}}\psi(x)\,.
\end{equation} 
Writing $\mathcal A^{\rm new}=\Fb+\mathcal A^{\rm new}-\Fb$, we get by Cauchy's inequality, 
$$ |(\nabla-i\kappa H \mathcal A^{\rm new})u|^2\geq 
(1-\kappa^{-1/2})|(\nabla-i\kappa H \Fb)u|^2-\kappa^{1/2}
(\kappa H)^2|(\mathcal A^{\rm new}-\Fb)u|^2\,.
$$
Consequently, we infer from the foregoing inequality and the third inequality in \eqref{eq:apriori}, 
  \begin{multline}\label{eq:en-lb}
 \mathcal{G}_0\big(u,\mathcal{A}^{\rm new};Q_\ell(x_0)\big)\geq(1-\kappa^{-1/2})\mathcal{G}_0\big(u,\Fb,Q_\ell(x_0)\big)
  +\kappa^{1/2} \int_{Q_\ell(x_0)}\left(-\kappa^2|u|^2+\frac{\kappa^2}{2}|u|^4\right)dx\\-Cb^2\ell^{2\alpha}\kappa^{5/2}\int_{Q_\ell(x_0)}|u|^2dx.
\end{multline}
Using that $|u|=|\psi|\leq 1$ by \eqref{eq:apriori}, we can estimate the remainder terms in \eqref{eq:en-lb} as follows
\begin{multline}\label{eq:en-lb-rest}
\kappa^{-1/2}\int_{Q_\ell(x_0)}\left(-\kappa^2|u|^2+\frac{\kappa^2}{2}|u|^4\right)dx-Cb^2\ell^{2\alpha}\kappa^{5/2}\int_{Q_\ell(x_0)}|u|^2dx\\
\geq -\kappa^2\ell^2\big(\kappa^{-1/2}+Cb^2\kappa^{1/2}\ell^{2\alpha}\big)\,.
\end{multline}
In order to estimate the term $\mathcal{G}_0\big(u,\Fb,Q_\ell(x_0)\big)$ in \eqref{eq:en-lb}, we will go from the potential $\Fb$ to the potential $\Ab$ introduced in \eqref{eq:mag-pot}. Let $\vartheta$ be the function in \eqref{eq:A=F} and set
\begin{equation}\label{eq:fct-v}
v=e^{-i\kappa H \vartheta}u\,.
\end{equation}
Then 
\begin{multline}\label{eq:en-lb*}
\mathcal{G}_0\big(u,\Fb;Q_\ell(x_0)\big)=\mathcal{G}_0\big(v,\Ab;Q_\ell(x_0)\big)\\
\geq (1-\kappa^{-1/2})\mathcal{G}_0\Big(w,\Ab_{\rm av}^{Q_\ell(x_0)};Q_\ell(x_0)\Big)-\hat C\kappa^{3/2}\|\nabla B\|_{L^2(Q_\ell(x_0))}^2\\
+\kappa^{-1/2}\int_{Q_\ell(x_0)}\left(-\kappa^2|w|^2+\frac{\kappa^2}2|w|^4\right)\,dx
\end{multline}
where we used Proposition~\ref{prop:app-qf}, with $\sigma=\kappa H=b\kappa^2$ and $\eta=1/4$, to estimate  the $L^2$-norm of $|(\nabla-i\kappa H\Ab)v|$; the function $w$ is expressed in terms of $v$ and the gauge function $\varphi^{Q_\ell(x_0)}$ of Proposition~\ref{prop:app-qf} as follows 
\begin{equation}\label{eq:fct-w}
w(x)=v(x)\exp\left(i\kappa H\varphi^{Q_\ell(x_0)}(x)\right)\,.
\end{equation}
Since $|v|=|u|=|\psi|\leq 1$ by \eqref{eq:apriori}, we infer from \eqref{eq:en-lb*},
\begin{equation}\label{eq:en-lb**}
\mathcal{G}_0\big(u,\Fb;Q_\ell(x_0)\big)\geq (1-\kappa^{-1/2})\mathcal{G}_0\Big(w,\Ab_{\rm av}^{Q_\ell(x_0)};Q_\ell(x_0)\Big)-\hat C\kappa^{3/2}\|\nabla B\|_{L^2(Q_\ell(x_0))}^2-\kappa^{3/2}\ell^2\,.
\end{equation}
Note that $\curl\Ab_{\rm av}^{Q_\ell(x_0)}=B_{\rm av}^\ell(x_0)$ introduced in \eqref{eq:B-av}. We write now a lower bound of the energy $\mathcal{G}_0\Big(w,\Ab_{\rm av}^{Q_\ell(x_0)};Q_\ell(x_0)\Big)$ using the bulk energy function $g(\cdot)$.  To that end, we introduce 
\begin{itemize}
\item $\hat b=\frac{H}{\kappa}B_{\rm av}^\ell(x_0)=bB_{\rm av}^{\ell}(x_0)$\,;
\item $R=\ell\sqrt{\kappa H B_{\rm av}^{\ell}(x_0)}$\,;
\item $h(x)=w\left(\frac{\ell}{R}x+x_0\right)$ for  $x\in Q_R:=(-R/2,R/2)^2$\,;
\item The change of variable $y=\frac{R}{\ell}(x-x_0)$.
\end{itemize} 
It is then easy to check that
$$\mathcal{G}_0\Big(w,\Ab_{\rm av}^{Q_\ell(x_0)};Q_\ell(x_0)\Big)=\dfrac{1}{\hat b}G_{\hat b,Q_R}(h)\geq m(\hat b,R)\geq \frac1{\hat b} \big(g(\hat b)R^2-\tilde CR\big)\,,$$
by \eqref{eq:g(b)}. Inserting the foregoing inequality into \eqref{eq:en-lb**}, then remembering the definition of $\hat b$, choosing $\alpha=\frac56$, and collecting the inequalities in \eqref{eq:en-lb*}, \eqref{eq:en-lb-rest}, \eqref{eq:en-lb},  and \eqref{eq:E=Enew}, we eventually get the following inequality,
\begin{multline*}
\mathcal{G}_0(\psi,\mathcal{A};Q_\ell(x_0))\\\geq
g\left(b B_{\rm av}^{\ell}(x_0)\right)\kappa^2\ell^2
-\check C\left(\kappa^2\ell^2\Big(\kappa^{-1/3}+\kappa^{-1/4}\sqrt{B_{\rm av}^{\ell}(x_0)}\,\Big)-\kappa^{3/2}\|\nabla B\|^2_{L^2(Q_\ell(x_0))}\right)\,.
\end{multline*}
Finally, we apply Lemma~\ref{lem1} with $\zeta=\frac1{16}$.
\end{proof}

\subsection{Upper bound of GL energy}

\begin{prop}\label{prop:ub-loc}
Given $\epsilon\in(0,1)$ and $c_2>c_1>0$, there exist $C,\kappa_0>0$ such that, for all $\kappa\geq \kappa_0$,  the following  holds.

For every $x_0\in\mathcal J_\ell$, with $\ell$ satisfying \eqref{eq:ell}, there exists a function $v_{x_0,\ell}\in H^1_0(Q_\ell(x_0))$ such that 
$$\mathcal G_0(v_{x_0,\ell},\Fb;Q_\ell(x_0))\leq   g\big(b B_{\rm av}^\ell(x_0)\big)\kappa^2\ell^2+C\Big(\kappa^{7/4}\ell^2+\kappa^{3/2}\|\nabla B\|_{L^2(Q_\ell(x_0)}^2\Big)\,,$$
where
\begin{itemize}
\item $\Fb$ is the magnetic potential introduced in \eqref{eq:F}\,;
\item $H=b\kappa$ and $b\in(\epsilon,\frac1\epsilon)$\,;
\item the functional $\mathcal G_0(\cdot,\cdot\,;Q_\ell(x_0))$ is introduced in Proposition~\ref{prop:lb-loc}\,.
\end{itemize}
\end{prop}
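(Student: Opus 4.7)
The plan is to construct the trial function by reversing the chain of rescalings and gauge transformations used in the proof of Proposition~\ref{prop:lb-loc}. Namely, I would start from a Dirichlet minimizer of the constant-field reference problem on a rescaled square $Q_R$, rescale back to $Q_\ell(x_0)$ to obtain a function adapted to $\Ab_{\rm av}^{Q_\ell(x_0)}$, and then apply the two gauge phases coming from Proposition~\ref{prop:app-qf} and from \eqref{eq:A=F} to convert it into a competitor for $\mathcal G_0(\cdot,\Fb;Q_\ell(x_0))$.

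Concretely, set $\hat b=bB_{\rm av}^\ell(x_0)$ and $R=\ell\sqrt{\kappa H B_{\rm av}^\ell(x_0)}$, and let $w\in H^1_0(Q_R)$ be a minimizer of $m_0(\hat b,R)$, so that $|w|\leq 1$ and $G_{\hat b,Q_R}(w)\leq g(\hat b)R^2+CR$ by \eqref{eq:g(b)}. Define the rescaled profile $h(x):=w\bigl(\tfrac{R}{\ell}(x-x_0)\bigr)$ on $Q_\ell(x_0)$; then $h\in H^1_0(Q_\ell(x_0))$, $|h|\leq 1$, and the change of variables (together with the homogeneity $\Ab_0(\tfrac{R}{\ell}(x-x_0))=\tfrac{R}{\ell}\Ab_0(x-x_0)$ and $(R/\ell)^2=\kappa H B_{\rm av}^\ell(x_0)$) yields
\[
\mathcal G_0\bigl(h,\Ab_{\rm av}^{Q_\ell(x_0)};Q_\ell(x_0)\bigr)=\hat b^{-1}G_{\hat b,Q_R}(w)\leq g(\hat b)\kappa^2\ell^2+\mathcal O(\kappa\ell),
\]
since $\hat b\geq \epsilon c$. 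Next let $\varphi:=\varphi^{Q_\ell(x_0)}$ be the gauge function from Proposition~\ref{prop:app-qf} and $\vartheta$ the function from \eqref{eq:A=F}, and set
\[
v_{x_0,\ell}(x):=\exp\bigl(-i\kappa H\bigl(\varphi(x)+\vartheta(x)\bigr)\bigr)\,h(x),
\]
which lies in $H^1_0(Q_\ell(x_0))$ (both $\varphi$ and $\vartheta$ being in $H^3$, hence continuous on the closure) and satisfies $|v_{x_0,\ell}|=|h|$.

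By gauge invariance, $q_{\kappa H}(v_{x_0,\ell},\Fb;Q_\ell(x_0))=q_{\kappa H}(e^{-i\kappa H\varphi}h,\Ab;Q_\ell(x_0))$, and I would invoke Proposition~\ref{prop:app-qf} with $\sigma=\kappa H=b\kappa^2$, $\rho=3/8$ and $\eta=1/4$ to obtain
\[
q_{\kappa H}(v_{x_0,\ell},\Fb;Q_\ell(x_0))\leq (1+\kappa^{-1/2})\,q_{\kappa H}(h,\Ab_{\rm av}^{Q_\ell(x_0)};Q_\ell(x_0))+C\kappa^{3/2}\|\nabla B\|_{L^2(Q_\ell(x_0))}^2,
\]
using $\|h\|_{L^\infty}\leq 1$ and $(\kappa H)^{2-4\rho+\eta}\asymp \kappa^{3/2}$. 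Since $|v_{x_0,\ell}|=|h|$, the potential terms of $\mathcal G_0$ match exactly; adding $-\kappa^2\|h\|_{L^2}^2+\tfrac{\kappa^2}{2}\|h\|_{L^4}^4$ to both sides and using the crude bound $q_{\kappa H}(h,\Ab_{\rm av}^{Q_\ell(x_0)})\leq \mathcal G_0(h,\Ab_{\rm av}^{Q_\ell(x_0)})+\kappa^2\|h\|_{L^2}^2=\mathcal O(\kappa^2\ell^2)$ to absorb the cross term $\kappa^{-1/2}q_{\kappa H}(h,\Ab_{\rm av}^{Q_\ell(x_0)})=\mathcal O(\kappa^{3/2}\ell^2)$ then yields, in view of $\ell\asymp \kappa^{-3/4}$ (so that $\kappa\ell$ and $\kappa^{3/2}\ell^2$ are both $\leq C\kappa^{7/4}\ell^2$), the target estimate $g(\hat b)\kappa^2\ell^2+C\kappa^{7/4}\ell^2+C\kappa^{3/2}\|\nabla B\|_{L^2(Q_\ell(x_0))}^2$.

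The main obstacle I anticipate is the parameter bookkeeping: the exponents $\rho=3/8$ and $\eta=1/4$ must be chosen simultaneously so that the approximation error $(\kappa H)^{2-4\rho+\eta}\|\nabla B\|_{L^2}^2$ produced by Proposition~\ref{prop:app-qf} matches the prescribed $\kappa^{3/2}\|\nabla B\|_{L^2}^2$ while the multiplicative cross-term $\kappa^{-\eta}q_{\kappa H}(h,\Ab_{\rm av}^{Q_\ell(x_0)})$ stays below $C\kappa^{7/4}\ell^2$; beyond that, the remaining work lies in verifying carefully that the scaling identity $\mathcal G_0(h,\Ab_{\rm av}^{Q_\ell(x_0)})=\hat b^{-1}G_{\hat b,Q_R}(w)$ holds exactly, which in turn relies on the precise choice $(R/\ell)^2=\kappa H B_{\rm av}^\ell(x_0)$ and the linearity of $\Ab_0$.
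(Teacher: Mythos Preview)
Your proposal is correct and follows essentially the same route as the paper: choose the Dirichlet minimizer $w$ of $m_0(\hat b,R)$ with $\hat b=bB_{\rm av}^\ell(x_0)$ and $R=\ell\sqrt{\kappa H B_{\rm av}^\ell(x_0)}$, rescale it to $h\in H^1_0(Q_\ell(x_0))$, then multiply by the two gauge phases coming from \eqref{eq:A=F} and Proposition~\ref{prop:app-qf}, and apply Proposition~\ref{prop:app-qf} with $\sigma=\kappa H$, $\rho=3/8$, $\eta=1/4$. The parameter bookkeeping you outline (in particular $\sigma^{-\eta}\asymp\kappa^{-1/2}$, $\sigma^{2-4\rho+\eta}\asymp\kappa^{3/2}$, and $\kappa\ell,\ \kappa^{3/2}\ell^2\leq C\kappa^{7/4}\ell^2$ when $\ell\asymp\kappa^{-3/4}$) matches the paper's choices exactly.
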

\begin{proof}
We choose $b\in(\epsilon,\epsilon^{-1})$ and an arbitrary point $x_0\in\mathcal J_\ell$, with $\ell\approx\kappa^{-3/4}$. We introduce the two parameters (that depend on $x_0$ and $\ell$)
\[\hat b=\frac{H}{\kappa}B_{\rm av}^\ell(x_0)=bB_{\rm av}^{\ell}(x_0)\quad{\rm and}\quad
R=\ell\sqrt{\kappa H B_{\rm av}^{\ell}(x_0)}\,.\]
Let $u_{\hat b,R}\in H^1_0\big((-R/2,R/2)^2\big)$ be a minimizer of the energy functional $m_0(\hat b,R)$.  For all $x\in Q_\ell(x_0)$, we introduce the function $v:=v_{x_0,\ell}\in H^1_0(Q_\ell(x_0))$ as follows
\[ v(x)=\exp\Big(i\kappa H(\varphi+\vartheta)\Big)\,u_{\hat b,R}\left(\frac{R}{\ell}(x-x_0)\right)\,,\]
where $\vartheta$ is the function introduced in \eqref{eq:A=F} and $\varphi:=\varphi^{Q_\ell(x_0)}$ is the function introduced in Proposition~\ref{prop:app-qf}.  
Setting $h=\exp\Big(-i\kappa H(\varphi+\vartheta)\Big)v$, it is easy to check that
\[
\mathcal G_0\left(h,\Ab_{\rm av}^{Q_\ell(x_0)};Q_\ell(x_0)\right)=
\frac1{\hat b} m_0(\hat b,R)\,.
\]
Using \eqref{eq:g(b)}, we get further
\begin{equation}\label{eq:ub-cell}
\mathcal G_0\left(h,\Ab_{\rm av}^{Q_\ell(x_0)};Q_\ell(x_0)\right)\leq g\big(bB_{\rm av}^\ell(x_0)\big)\kappa^2\ell^2+\mathcal O(\kappa\ell)\,.
\end{equation}
Setting $u=\exp\Big(-i\kappa H \vartheta\Big)v$, we get by \eqref{eq:A=F},
\begin{equation}\label{eq:ub-cell*}
\mathcal{G}_0\big(v,{\bf F};Q_{\ell}(x_0)\big)=\mathcal{G}_0\big(u,{\bf A};Q_{\ell}(x_0)\big)\,.
\end{equation}
Now we apply Proposition~\ref{prop:app-qf} with $\sigma=\kappa H=b\kappa^2$, $\rho=3/8$ and $\eta=1/4$; eventually we get
\begin{multline*}
\mathcal{G}_0\big(u,{\bf A};Q_{\ell}(x_0)\big)\leq
(1+\kappa^{-1/2}) \mathcal{G}_0\big(h,\Ab_{\rm av}^{Q_\ell(x_0)};Q_{\ell}(x_0)\big)\\
-\kappa^{-1/2} \int_{Q_{\ell}(x_0)}\left(-\kappa^2|h|^2+\frac{\kappa^2}{2}|h|^4\right)dx+\mathcal O\Big( \kappa^{3}\|\nabla B\|_{L^2(Q_\ell(x_0)}^2\Big)\int_{Q_{\ell}(x_0)}|h|^2\,dx\,.
\end{multline*}
Since $|h|\leq 1$, we get further
\begin{equation}\label{eq:ub-cell**}
\mathcal{G}_0\big(u,{\bf A};Q_{\ell}(x_0)\big)\leq (1+\kappa^{-1/2}) \mathcal{G}_0\big(h,\Ab_{\rm av}^{Q_\ell(x_0)};Q_{\ell}(x_0)\big)+\mathcal O(\kappa^{3/2}\ell^2)+\mathcal O(\kappa^{3/2})\|\nabla B\|_{L^2(Q_\ell(x_0))}^2\,.
\end{equation}
Collecting \eqref{eq:ub-cell**}, \eqref{eq:ub-cell*} and \eqref{eq:ub-cell}, we finish the proof of Proposition~\ref{prop:ub-loc}.
\end{proof}

\subsection{Proof of Theorem~\ref{thm:GL}}\

Now we work under the assumptions of Theorem~\ref{thm:GL}. We fix $\epsilon\in(0,1)$ and assume that $H=b\kappa$ with $b$ varying in $(\epsilon,\epsilon^{-1})$. Recall that $\ell\approx\kappa^{-3/4}$ by \eqref{eq:ell}.\medskip

{\bf Step~1:}

Denote by $(\psi,\mathcal A)_{\kappa,H}$ a minimizing configuration such that $\mathcal G(\psi,\mathcal A)=\gse(\kappa,H)$. Dropping the term $\kappa^2H^2\int_\Omega|\curl(\mathcal A-\Fb)|^2\,dx$ from the energy $\mathcal G(\psi,\mathcal A)$, we get the obvious lower bound
\[
\gse(\kappa,H)=\mathcal{G}(\psi,\mathcal{A})\geq \mathcal{G}_0\big(\psi,\mathcal{A};\Omega\big)= \mathcal{G}_0\big(\psi,\mathcal{A};\Omega_{\ell}\big)+ \mathcal{G}_0\big(\psi,\mathcal{A};\Omega\setminus\Omega_{\ell}\big)
\]
where $\mathcal G_0$ is the energy introduced in Proposition~\ref{prop:lb-loc}, and $\Omega_\ell$ is the domain introduced in \eqref{eq:Om-ell}. Using the uniform bounds $|\psi|\leq 1$ and  $|\Omega\setminus\Omega_\ell|=\mathcal O(\ell)$, we get
\[\mathcal{G}_0\big(\psi,\mathcal{A};\Omega\setminus\Omega_{\ell}\big)\geq -\kappa^2\int_{\Omega\setminus\Omega_\ell}=\mathcal O(\ell\kappa^2)=\mathcal O(\kappa^{5/4})\,.\]
Now, we use the obvious decomposition
$\mathcal{G}_0\big(\psi,\mathcal{A};\Omega_{\ell}\big)=\sum\limits_{x\in\mathcal J_\ell}\mathcal G_0\big(\psi,\mathcal A;Q_\ell(x)\big)$ and apply Proposition~\ref{prop:lb-loc}. Eventually, we get 
\[
\gse(\kappa,H)\geq
\kappa^2\ell^2\sum_{x\in \mathcal{J}_{\ell}}g\big(bB_{\rm{av}}^{\ell}(x_0)\big)-C\kappa^{5/4}-C\sum_{x\in\mathcal{J}_{\ell}}\left(\kappa^{15/8}\ell^2+\kappa^{3/2}\|\nabla B\|^2 _{L^2(Q_{\ell}(x_0))}\right)\,.
\]
Since the squares $(Q_\ell(x))_{x\in\mathcal J_\ell}$ are pairwise disjoint,  $\sum\limits_{x\in\mathcal{J}_{\ell}}\|\nabla B\|^2_{L^2(Q_{\ell}(x_0))}=\|\nabla B\|^2_{L^2(\Omega_\ell)}\leq \|\nabla B\|^2_{L^2(\Omega)}$. Using \eqref{eq:N-ub},  $\sum\limits_{x\in\mathcal{J}_{\ell}}\ell^2=N(\ell)\ell^2=\mathcal O(1)$. Consequently, 
\[ 
\gse(\kappa,H)\geq
\kappa^2\ell^2\sum_{x\in \mathcal{J}_{\ell}}g\big(bB_{\rm{av}}^{\ell}(x_0)\big)+\mathcal O(\kappa^{15/8})\quad(\kappa\to+\infty)\,.
\]

{\bf Step~2:}

We introduce the function $\psi^{\rm trial}\in H^1_0(\Omega)$ as follows
\begin{equation}\label{eq:trial-state}\psi^{\rm trial}(y)=\sum_{x\in\mathcal{J}_{\ell}}{\bf 1}_{Q_{\ell}(x)}v_{x,\ell}(y)\quad (y\in \Omega)\,,
\end{equation}
where, for $x\in \mathcal J_\ell$, $v_{x,\ell}\in H^1_0(Q_\ell(x))$ is the function introduced in Proposition~\ref{prop:ub-loc} and extended by $0$ on $\Omega\setminus Q_\ell(x)$. Clearly, $\gse(\kappa,H)\leq \mathcal{G}\big(\psi^{\rm trial},{\bf F}\big)=\mathcal{G}_0\big(\psi^{\rm trial},{\bf F};\Omega\big)$. Using Proposition~\ref{prop:ub-loc} and that the squares $(Q_\ell(x))_{x\in\mathcal J_\ell}$ are pairwise disjoint, we write
\begin{align*}
\mathcal{G}_0\big(\psi^{\rm trial},{\bf F}\big)&=\sum_{x\in \mathcal{J}_\ell}\mathcal{G}_0\big(v_{x,\ell},{\bf F};Q_\ell(x)\big)
\\
&\leq \kappa^2\ell^2\sum_{x\in\mathcal{J}_{\ell}}g\big(bB_{\text{av}}^{\ell}(x)\big)+C\sum_{x\in\mathcal{J}_{\ell}}\left(\kappa^{7/4}\ell^2+\kappa^{3/2}||\nabla B||^2_{L^2(Q_{\ell}(x))} \right)\\&\leq  \kappa^2\ell^2\sum_{x\in\mathcal{J}_{\ell}}g(bB_{\text{av}}^{\ell}(x))+CN(\ell)\kappa^{7/4}\ell^2+\kappa^{3/2}\|\nabla B\|_{L^2(\Omega)}\\
&=  \kappa^2\ell^2\sum_{x\in\mathcal{J}_{\ell}}g(bB_{\text{av}}^{\ell}(x))+\mathcal O(\kappa^{7/4})\,.
\end{align*}

\subsection{Further remarks}

We collect here some  additional properties for later use. In the sequel,  $(\psi,\mathcal{A})_{\kappa,H}$ denotes a minimizing configuration of the energy in \eqref{eq:GL}. 

We start by a rough estimate of $\mathcal A-\Fb$.  By dropping the positive terms in the inequality $\mathcal G(\psi,\mathcal A)\leq \mathcal G(0,\Fb)=0$ we get the following estimate
\[\|\curl(\mathcal A-\Fb)\|_{L^2(\tilde\Omega)}\leq H^{-1}\|\psi\|_{L^2(\Omega)}\,.\]
Also, $(\psi,\mathcal A)$ being a critical point of the GL energy (see \eqref{eq:Euler-Lag}), we know that $\curl(\mathcal A-\Fb)=0$ on $\partial\tilde\Omega$ (see \cite[Eq.~(10.8b)]{FH-b}); hence, the curl-div inequality \cite[Prop.~D.2.1]{FH-b} yields that $\mathcal A-\Fb\in H^1(\tilde\Omega)$; we deduce then by the Sobolev embedding of $H^1(\tilde\Omega)$ in $L^4(\tilde\Omega)$ that
\begin{equation}\label{eq:A-F-L4}
\|\mathcal A-\Fb\|_{L^4(\tilde\Omega)}\leq C_*\|\curl(\mathcal A-\Fb)\|_{L^2(\tilde\Omega)}\leq C_*H^{-1}\|\psi\|_{L^2(\Omega)}\,,
\end{equation}
where $C_*$ depends on $\tilde\Omega$.

We mention some additional properties that follow along the proof of Theorem~\ref{thm:GL} ( see e.g. \cite[Thm~1.2~\&~p.~6636]{AK}). Firstly, we have the improved estimate for the magnetic energy
\[\|\curl(\mathcal A-\Fb)\|_{L^2(\tilde\Omega)}=\mathcal O(\kappa^{-1/8})\,,\]
and also for the energy of $\psi$,
\[\mathcal G_0(\psi,\mathcal A):=\int_\Omega\Big(|(\nabla-i\kappa H\mathcal A)\psi|^2-\kappa^2|\psi|^2+\frac{\kappa^2}2|\psi|^4\Big)\,dx=  \gse^{\rm asy}(b,\ell)\kappa^2+o(\kappa^2)\,.\]
We infer  from  \eqref{eq:Euler-Lag} that $\mathcal G_0(\psi,\mathcal A)=-\frac{\kappa^2}{2}\|\psi\|_{L^4(\Omega)}^4$, which eventually yields the following formula for
 the $L^4$-energy of the order parameter,
\begin{equation}\label{eq:psi-L4}
 \|\psi\|_{L^4(\Omega)}^4\leq  -2\gse^{\rm asy}(b,\ell) +\mathcal O(\kappa^{-1/8})\,.
 \end{equation}

\Bk

\subsection{Application: The Dirichlet Laplacian}\label{sec:mag-lap}

Assuming the hypothesis in Theorem~\ref{thm:evD} on the domain $\Omega$, we will derive an asymptotic upper bound on the eigenvalue $\lambda(\sigma,\Ab;\Omega)$, by constructing a trial state related to the GL order parameter. 

Under the hypothesis in Theorem~\ref{thm:evD}, it is sufficient to handle the case where the domain $\Omega$ consists of a single connected component. In fact, by the min-max principle, $\lambda(\sigma,\Ab;\Omega)=\min\limits_{1\leq i\leq N} \lambda(\sigma,\Ab;\Omega_i)$. 

In the sequel, we assume that $\Omega$ is  connected and its boundary consists of a finite number of connected components (as in Sec.~\ref{sec:GL}). Recall
 the divergence free magnetic potential, $\Fb$, introduced in \eqref{eq:F}. In light of the relation \eqref{eq:A=F}, we observe that
\begin{equation}\label{eq:mu1}
\lambda(\sigma,\Ab;\Omega)=\lambda(\sigma,\Fb):=\inf_{u\in H^1(\Omega)\setminus\{0\}}
\frac{\|(\nabla-i\sigma\Fb)u\|^2_{L^2(\Omega)}}{\|u\|^2_{L^2(\Omega)}}\,.
\end{equation}

The hypothesis $m_0(B;\Omega)>0$ yields that $B(x)\geq c>0$ a.e. on $\Omega$, where $c=m_0(B;\Omega)>0$ is  constant.
This allows us to benefit from the results and the analysis of Sec.~\ref{sec:GL}.

\subsubsection{Link with the GL energy}

In the sequel,  we set
\begin{equation}\label{eq:ell'}
\ell=\ell_\sigma:= \sigma^{-3/8}\,.
\end{equation}
We fix $a\in(0,1)$ and introduce the parameters
\begin{equation}\label{eq:kap-sigm-b}
 b=\dfrac{1-a}{m_0(B;\Omega)}\,,\quad \kappa= b^{-1/2}\sigma^{1/2}\quad{\rm and}\quad H=b\kappa\,. 
\end{equation}
The conditions in  \eqref{eq:ell'} and \eqref{eq:kap-sigm-b} ensure that, as $\sigma\to+\infty$, the configuration $(\kappa,H,\ell, b)$ satisfies the requirements for using Theorem~\ref{thm:GL}. 
In particular,
\begin{equation}\label{eq:GLenD-asy}
\gse (\kappa,H)=\kappa^2\gse^{\rm asy}(b,\ell)+\mathcal O(\kappa^{15/8})\,.
\end{equation}
Furthermore, by Remark~\ref{rem:GL} and \eqref{eq:l-o-t=0}, there exist  constants $c_a'>c_a>0$ such that
\begin{equation}\label{eq:enD-asym<0}
-c_a\leq \gse^{\rm asy}(b,\ell)\leq -c_a'
\end{equation}
and
\begin{equation}\label{eq:ca}
c_a,c_a'=\mathcal O(a)\quad(a\to0_+)\,.
\end{equation}
Next we pick a minimizing configuration $(\psi,\mathcal A)_{\kappa,H}$. Collecting \eqref{eq:enD-asym<0} and \eqref{eq:psi-L4}, we obtain
\begin{equation}\label{eq:psi-L4*}
2c_a\leq  \|\psi\|_{L^4(\Omega)}^4\leq  2c_a'\,.
 \end{equation}
Consequently, since $|\psi|\leq 1$ everywhere, we get
\begin{equation}\label{eq:psi>0}
\int_\Omega|\psi|^2\,dx\geq \int_\Omega|\psi|^4\,dx\geq 2c_a>0\,.
\end{equation}  \Bk
\begin{rem}\label{rem:bB>1a.e.}
Fix an arbitrary positive number $\varepsilon<1$. Let us assume that  $\mathrm E^{\rm asy}(b,\ell)\underset{\ell\to0_+}{=}o(1)$ and set $n_\ell={\rm Card}\mathcal J'_\ell$ where $\mathcal J'_\ell:=\{x\in\mathcal J_\ell,~bB_{\rm av}^\ell(x)\leq 1-\varepsilon\}$ and $\mathcal J_\ell$ is introduced in \eqref{eq:J}.  Since $g\leq 0$ and monotone increasing,
\[ o(1)=\mathrm E^{\rm asy}(b,\ell):=\ell^2\sum_{x\in\mathcal J_\ell} g(bB_{\rm av}^\ell(x))\leq \ell^2\sum_{x\in\mathcal J_\ell'} g(bB_{\rm av}^\ell(x)) \leq \ell^2 n_\ell g(1-\varepsilon )\leq0,\]
hence, $n_\ell=o(\ell^{-2})$. This yields, by  Lebesgue's differentiation theorem, that $bB(z)\geq 1-\varepsilon$ a.e. in $\Omega$.
In fact, if we pick $z\in\Omega$ and $\delta_1(z)>0$ so that $D(z,\delta_1(z))\subset\Omega$, then  for any fixed  $\delta\in(0,\delta_1(z))$, we have $\int_{D(z,\delta)} bB(y)dy\geq (1-\varepsilon)|D(z,\delta)| $, since
\[
\int_{D(z,\delta)} bB(y)dy\geq 
\sum\limits_{\substack{x\in\mathcal J_\ell\setminus \mathcal J_\ell'\\ Q_\ell(x) \subset D(z,\delta)}} \int_{Q_\ell(x)} b B(y)dy\geq (1-\varepsilon)\ell^2\mathcal N_\ell\]
where $\mathcal N_\ell= {\rm Card}(\{x\in\mathcal J_\ell\setminus \mathcal J_\ell',~ Q_\ell(x)\subset D(z,\delta)\})=|D(z,\delta)|\ell^{-2}+o(\ell^{-2})$ as $\ell\to0$\,.
\end{rem}
\subsubsection{The trial state}
We introduce a cut-off function $\chi_\ell\in C_c^\infty(\Omega)$ in order to produce a trial state in $H^1_0(\Omega)$. We choose $\chi_\ell$ such that
\[\chi_\ell(x)=1{\rm ~for~} {\rm dist}(x,\partial\Omega)>2\ell\,,\quad {\rm and}~0\leq \chi_\ell\leq 1, ~|\nabla\chi_\ell |\leq C_0\ell^{-1}~{\rm in~}\Omega\,.\]
Using \eqref{eq:Euler-Lag}, we check that
\begin{equation}\label{eq:ub-trial-state-m}
\begin{aligned}
\|(\nabla-i\kappa H\mathcal A)(\chi_\ell\psi)\|_{L^2(\Omega)}^2&={\rm Re}\langle -(\nabla-i\kappa H\mathcal A)\psi,\chi_\ell^2\psi\rangle_{L^2(\Omega)}+\| \psi \nabla\chi_\ell\|_{L^2(\Omega)}^2\\
&\leq \kappa^2 \|\chi_\ell\psi\|_{L^2(\Omega)}^2+C_0^2\ell^{-2}\|\psi\|_{L^2(\Omega)}^2\,.
\end{aligned}
\end{equation}
By the simple identity $\mathcal A=\Fb+(\mathcal A-\Fb)$ and  Cauchy's inequality,  we write, for any $\delta\in(0,1)$,
\begin{equation}\label{eq:lb-via-mu1}
\|(\nabla-i\kappa H\mathcal A)(\chi_\ell\psi)\|_{L^2(\Omega)}^2\geq (1-\delta)\|(\nabla-i\kappa H\mathcal \Fb)(\chi_\ell\psi)\|_{L^2(\Omega)}^2-\delta^{-1}\kappa^2H^2\|(\mathcal{A}-{\bf F})\psi\|^2_{L^2(\Omega)}.
\end{equation}
We estimate the term $\|(\mathcal{A}-{\bf F})\psi\|^2_{L^2(\Omega)}$ by using H\"older's inequality, and the two estimates in \eqref{eq:A-F-L4} and \eqref{eq:psi-L4*}; eventually, we get
\[
\|(\mathcal{A}-{\bf F})\psi\|^2_{L^2(\Omega)}\leq \|\mathcal A-\Fb\|_{L^4(\Omega)}^2\|\psi\|_{L^4(\Omega)}^2\leq \dfrac{C}{H^2}(2c_a')^{1/2}\int_{\Omega}|\psi|^2dx\,.\]
We insert this into \eqref{eq:lb-via-mu1} to get (note that $(1-\delta)^{-1}\leq 2$)
\[
\|(\nabla-i\kappa H\mathcal A)(\chi_\ell\psi)\|_{L^2(\Omega)}^2\geq (1-\delta)\Big(\|(\nabla-i\kappa H\mathcal \Fb)(\chi_\ell\psi)\|_{L^2(\Omega)}^2
-2\kappa^2\delta^{-1}C(2c_a')^{1/2}\int_{\Omega}|\psi|^2dx\Big)\,.
\]
Now we infer from \eqref{eq:ub-trial-state-m},
\[\|(\nabla-i\kappa H\mathcal \Fb)(\chi_\ell\psi)\|_{L^2(\Omega)}^2\leq \frac1{1-\delta} \Big(\kappa^2 \|\chi_\ell\psi\|_{L^2(\Omega)}^2+C_0^2\ell^{-2}\|\psi\|_{L^2(\Omega)}^2 \Big)+2\kappa^2\delta^{-1}C(2c_a')^{1/2}\int_{\Omega}|\psi|^2dx\,.\]
By \eqref{eq:kap-sigm-b}, $\kappa H=\sigma$. Then, in light of \eqref{eq:mu1}, we deduce that
\begin{equation}\label{eq:ub-evD-mod*}
\lambda(\sigma,\Fb)\leq  \frac{\kappa^2}{1-\delta} +\Big(\frac{C_0^2}{1-\delta}\ell^{-2} +2\kappa^2\delta^{-1}C(2c_a')^{1/2}\Big) \frac{\|\psi\|^2_{L^2{(\Omega})}}{\|\chi_\ell\psi\|^2_{L^2{(\Omega})}}\,.
\end{equation}
Since $\chi_\ell=1$ on $\{{\rm dist}(x,\partial\Omega)>2\ell\}$, we get from \eqref{eq:psi>0} a constant $M_a>0$ such that,
\[\|\chi_\ell\psi\|^2_{L^2{(\Omega})}\geq (1-M_a\ell)  \|\psi\|^2_{L^2{(\Omega})}\geq \frac12\|\psi\|_{L^2(\Omega)}^2 ~{\rm for~}\ell~{\rm close~to~}0\,. \]
Furthermore, by \eqref{eq:kap-sigm-b}, $\kappa^2=(1-a)^{-1}m_0(B;\Omega)\sigma$. And by \eqref{eq:ell'}, $\ell=\sigma^{-3/8}$. Therefore, we deduce from \eqref{eq:ub-evD-mod*},
\[\lambda(\sigma,\Fb)\leq \frac{m_0(B;\Omega)}{(1-\delta)(1-a)} \sigma +\frac{2C_0^2}{1-\delta}\sigma^{3/4}+ \frac{4C\delta^{-1}(2c_a')^{1/2} m_0(B;\Omega)}{1-a}\sigma\,.\]
Taking the successive limits, $\sigma\to+\infty$, $a\to0_+$ and $\delta\to0_+$, we
get
\[\limsup_{\sigma\to+\infty}\Big(\sigma^{-1}\lambda(\sigma,\Fb)\Big)\leq m_0(B;\Omega)\,.\]
Note that we make use of \eqref{eq:ca} which ensures that $c_a$ vanishes as $a$ approaches $0$.  
\Bk

\section*{Acknowledgments} 
The authors would like to thank B. Helffer and N. Raymond for the valuable comments on a preliminary version of this paper, and the anonymous referees for pointing an error in the formulation of Prop.~3.1 and a suggestion to  simplify the proof of Thm.~1.2.  A.K. is supported by the Lebanese University in the framework of the project ``Analytical and Numerical Aspects of the Ginzburg-Landau Model''. Part of this work has been carried out at  CAMS (\emph{Center for Advanced Mathematical Sciences}, Beirut). The authors acknowledge its hospitality.

\end{document}